\newcommand{\bbC}{\mathbb{C}}
\newcommand{\bbF}{\mathbb{F}}
\newcommand{\bbH}{\mathbb{H}}
\newcommand{\bbR}{\mathbb{R}}
\newcommand{\bbT}{\mathbb{T}}
\newcommand{\bbZ}{\mathbb{Z}}
\newcommand{\bfA}{\mathbf{A}}
\newcommand{\bfG}{\mathbf{G}}
\newcommand{\bfI}{\mathbf{I}}
\newcommand{\bfJ}{\mathbf{J}}
\newcommand{\bfL}{\mathbf{L}}
\newcommand{\bfx}{\mathbf{x}}
\newcommand{\bfy}{\mathbf{y}}
\newcommand{\bfphi}{\boldsymbol{\varphi}}
\newcommand{\bfPhi}{\boldsymbol{\Phi}}
\newcommand{\bfpsi}{\boldsymbol{\psi}}
\newcommand{\bfPsi}{\boldsymbol{\Psi}}
\newcommand{\bfchi}{\boldsymbol{\chi}}
\newcommand{\bfdelta}{\boldsymbol{\delta}}
\newcommand{\bfDelta}{\boldsymbol{\Delta}}
\newcommand{\bfGamma}{\boldsymbol{\Gamma}}
\newcommand{\calD}{\mathcal{D}}
\newcommand{\calE}{\mathcal{E}}
\newcommand{\calG}{\mathcal{G}}
\newcommand{\calH}{\mathcal{H}}
\newcommand{\calK}{\mathcal{K}}
\newcommand{\calM}{\mathcal{M}}
\newcommand{\calN}{\mathcal{N}}
\newcommand{\rmc}{\mathrm{c}}
\newcommand{\rme}{\mathrm{e}}
\newcommand{\rmi}{\mathrm{i}}
\newcommand{\tr}{\operatorname{tr}}
\newcommand{\Tr}{\operatorname{Tr}}
\newcommand{\coh}{\operatorname{coh}}
\newcommand{\ETF}{\operatorname{ETF}}
\newcommand{\Fro}{{\operatorname{Fro}}}
\newcommand{\RDS}{\operatorname{RDS}}
\newcommand{\BIBD}{\operatorname{BIBD}}
\newcommand{\rank}{\operatorname{rank}}
\newcommand{\Span}{\operatorname{span}}
\newcommand{\MUETF}{\operatorname{MUETF}}
\newcommand{\abs}[1]{|{#1}|}
\newcommand{\biggparen}[1]{\biggl({#1}\biggr)}
\newcommand{\bigbracket}[1]{\bigl[{#1}\bigr]}
\newcommand{\set}[1]{\{{#1}\}}
\newcommand{\biggset}[1]{\biggl\{{#1}\biggr\}}
\newcommand{\norm}[1]{\|{#1}\|}
\newcommand{\ip}[2]{\langle{#1},{#2}\rangle}
\newtheorem{theorem}{Theorem}
\newtheorem{corollary}{Corollary}
\theoremstyle{definition}
\newtheorem{remark}{Remark}
\newtheorem{example}{Example}
\newtheorem{definition}{Definition}
\begin{document}
\title{Mutually Unbiased Equiangular Tight Frames}

\author{Matthew~Fickus,~\IEEEmembership{Senior Member,~IEEE}, Benjamin~R.~Mayo%
\thanks{M.~Fickus and B.~R.~Mayo are with the Department of Mathematics and Statistics, Air Force Institute of Technology, Wright-Patterson Air Force Base, OH 45433, USA, e-mail: Matthew.Fickus@afit.edu.}}

\maketitle

\begin{abstract}
An equiangular tight frame (ETF) yields a type of optimal packing of lines in a Euclidean space.
ETFs seem to be rare, and all known infinite families of them arise from some type of combinatorial design.
In this paper, we introduce a new method for constructing ETFs.
We begin by showing that it is sometimes possible to construct multiple ETFs for the same space that are ``mutually unbiased" in a way that is analogous to the quantum-information-theoretic concept of mutually unbiased bases.
We then show that taking certain tensor products of these mutually unbiased ETFs with other ETFs sometimes yields infinite families of new complex ETFs.
\end{abstract}

\begin{IEEEkeywords}
Welch bound, equiangular tight frames, mutually unbiased bases, relative difference sets
\end{IEEEkeywords}

\section{Introduction}

For any $N\geq D\geq1$, $N>1$, Welch~\cite{Welch74} gives the following bound on the \textit{coherence} of $N$ unit vectors $\set{\bfphi_n}_{n=1}^{N}$ in $\bbC^D$:
\begin{equation}
\label{eq.Welch}
\coh(\set{\bfphi_n}_{n=1}^{N})
:=\max_{n\neq n'}\abs{\ip{\bfphi_n}{\bfphi_{n'}}}
\geq\bigbracket{\tfrac{N-D}{D(N-1)}}^{\frac12}.
\end{equation}
It is well known~\cite{StrohmerH03} that $\set{\bfphi_n}_{n=1}^{N}$ achieves equality in \eqref{eq.Welch} if and only if $\set{\bfphi_n}_{n=1}^{N}$ is an \textit{equiangular tight frame} (ETF) for $\bbC^D$,
that is, if and only if the value of $\abs{\ip{\bfphi_n}{\bfphi_{n'}}}$ is constant over all $n\neq n'$ (equiangularity) and there exists $C>0$ such that
C$\norm{\bfy}^2=\sum_{n=1}^{N}\abs{\ip{\bfphi_n}{\bfy}}^2$ for all $\bfy\in\bbC^D$ (tightness).

The coherence of any unit vectors is the cosine of the smallest principal angle between any two of the lines (one-dimensional subspaces) they individually span.
By achieving equality in~\eqref{eq.Welch},
an ETF yields $N$ lines in $\bbC^D$ whose smallest pairwise principal angle is as large as possible,
namely an optimal way to pack $N$ points on the projective space that consists of all lines in $\bbC^D$.
Due to their optimality,
ETFs arise in various applications including waveform design for wireless communication~\cite{StrohmerH03},
compressed sensing~\cite{BajwaCM12,BandeiraFMW13},
quantum information theory~\cite{Zauner99,RenesBSC04}
and algebraic coding theory~\cite{JasperMF14}.

Much of the ETF literature is devoted to the \textit{existence problem}:
for what $D$ and $N$ does there exist an $\ETF(D,N)$, that is, an $N$-vector ETF for $\bbC^D$?
Here, one key subproblem is to resolve \textit{Zauner's conjecture} that an $\ETF(D,D^2)$ exists for any $D\geq1$~\cite{Zauner99,RenesBSC04}.
In quantum information theory, such an ETF is called a \textit{symmetric, informationally complete, positive operator-valued measure} (SIC-POVM),
and a finite, but remarkable number of these have already been found~\cite{FuchsHS17}.
Another key subproblem is to characterize the existence of \textit{real} $\ETF(D,N)$,
that is, ETFs where $\ip{\bfphi_n}{\bfphi_{n'}}\in\bbR$ for all $n,n'$.
Real ETFs equate to a subclass of \textit{strongly regular graphs} (SRGs)~\cite{vanLintS66,Seidel76,HolmesP04,Waldron09},
which are a mature subject in and of themselves~\cite{Brouwer07,Brouwer17,CorneilM91}.
In general, the existence problem remains poorly understood,
with lists of known ETFs~\cite{FickusM16} falling far short of known necessary conditions, namely that an $\ETF(D,N)$ with $1<D<N-1$ can only exist if $N\leq\min\set{D^2,(N-D)^2}$~\cite{HolmesP04} (a generalization of \textit{Gerzon's bound}~\cite{LemmensS73}) and that an $\ETF(3,8)$ does not exist~\cite{Szollosi14}.

All known positive existence results for $\ETF(D,N)$ with $1<D<N-1$ are due to explicit construction involving some type of combinatorial design; see~\cite{FickusM16} for a survey.
ETFs whose \textit{redundancy} $\frac ND$ is either nearly or exactly $2$ arise from the related concepts of Hadamard matrices, conference matrices, Gauss sums and Paley tournaments~\cite{StrohmerH03,HolmesP04,Renes07,Strohmer08}.
The equivalence between real ETFs and certain SRGs has been partially generalized to the complex case using roots of unity~\cite{BodmannPT09,BodmannE10},
abelian distance-regular antipodal covers of complete graphs~\cite{CoutinkhoGSZ16,FickusJMPW19},
and association schemes~\cite{IversonJM16}.
\textit{Harmonic ETFs} equate to \textit{difference sets} in finite abelian groups~\cite{Konig99,XiaZG05,DingF07}.
\textit{Steiner ETFs} arise from \textit{balanced incomplete block designs} (BIBDs)~\cite{GoethalsS70,FickusMT12}.
Nontrivial generalizations of the Steiner ETF construction yield other ETFs arising from projective planes containing hyperovals~\cite{FickusMJ16},
Steiner triple systems~\cite{FickusJMP18},
and group divisible designs~\cite{FickusJ19}.

In this paper, we provide a new method for constructing ETFs.
It is inspired by an ETF-based perspective~\cite{FickusJKM18,FickusS20} of a classical factorization~\cite{GordonMW62} of the complement of a \textit{Singer} difference set in terms of a \textit{relative difference set} (RDS).
The main idea is to take tensor products of vectors in a given $\ETF(D_1,N_1)$ with those belonging to a collection of $N_1$ distinct $\ETF(D_2,N_2)$ that are \textit{mutually unbiased} in the quantum-information-theoretic sense.
We show that this technique, for example, yields (complex) $\ETF(D,N)$ with
\begin{equation}
\label{eq.new ETF from 2 pos}
D=\tfrac{Q-1}{Q+1}(\tfrac{Q-1}{2}Q^{2J-1}-1),
\quad
N=\tfrac{Q-1}{Q+1}(Q^{2J}-1),
\end{equation}
for any prime power $Q\geq4$ and $J\geq2$,
as well as ones with
\begin{equation}
\label{eq.neq ETF from GQ}
D=\tfrac{Q^3+1}{Q^4-1}(\tfrac{Q^3+1}{Q+1}Q^{4J-3}-1),
\
N=\tfrac{Q^3+1}{Q^4-1}(Q^{4J}-1),
\end{equation}
for any prime power $Q\geq2$ and $J\geq2$.
Remarkably, all such ETFs seem to be new.
For example, taking $J=2$ and $Q=4,5$ in~\eqref{eq.new ETF from 2 pos} yields $\ETF(57,153)$ and $\ETF(166,416)$, neither of which were previously known~\cite{FickusM16}.

In the next section we establish notation,
and review known concepts that we will need later on.
In Section~\ref{sec.MUETFs},
we explain what it means for several $\ETF(D,N)$ to be mutually unbiased (Definition~\ref{def.MUETF}),
and give a necessary condition on their existence (Theorem~\ref{thm.Gerzon}).
We moreover construct mutually unbiased ETFs from RDSs,
both in general (Theorem~\ref{thm.RDS gives MUETF})
and using a classical family (Corollary~\ref{cor.Singer MUETFF}).
In Section~\ref{sec.new ETFs},
we discuss the aforementioned tensor-product-based technique (Theorem~\ref{thm.tensor}).
Combining it with Corollary~\ref{cor.Singer MUETFF} yields our main result (Theorem~\ref{thm.main result}).
In special cases where the initial $\ETF(D_1,N_1)$ is \textit{positive} or \textit{negative} in the sense of~\cite{FickusJ19},
our main result yields several infinite families of new (complex) ETFs (Corollary~\ref{cor.pos neg}).

\section{Background}
\label{sec.background}

\subsection{Equiangular tight frames and Naimark complements}
Let $\bbF$ be either $\bbR$ or $\bbC$.
For any $N$-element set of indices $\calN$,
equip $\bbF^\calN:=\set{\bfx:\calN\rightarrow\bbF}$
with the complex dot product
$\ip{\bfx_1}{\bfx_2}:=\sum_{n\in\calN}[\bfx_1(n)]^*\bfx_2(n)$ which,
like all inner products in this paper, is conjugate-linear in its first argument.
For any finite sequence $\set{\bfphi_n}_{n\in\calN}$ of vectors in a Hilbert space $\bbH$ over $\bbF$,
the corresponding \textit{synthesis operator} is $\bfPhi:\bbF^\calN\rightarrow\bbH$,
$\bfPhi\bfx:=\sum_{n\in\calN}\bfx(n)\bfphi_n$.
Its adjoint $\bfPhi^*:\bbH\rightarrow\bbF^\calN$,
$(\bfPhi^*\bfy)(n)=\ip{\bfphi_n}{\bfy}$ is called the \textit{analysis operator}.
We sometimes identify a vector $\bfphi\in\bbH$ with its synthesis operator $\bfphi:\bbF\rightarrow\bbH$, $\bfphi(x):=x\bfphi$,
an operator whose adjoint is the linear functional $\bfphi^*:\bbH\rightarrow\bbF$,
$\bfphi^*\bfy=\ip{\bfphi}{\bfy}$.
In the special case where $\bbH=\bbF^\calD$ for some $D$-element set $\calD$,
$\bfPhi$ is just the $\calD\times\calN$ matrix whose $n$th column is $\bfphi_n$,
and $\bfPhi^*$ is its $\calN\times\calD$ conjugate-transpose.

In general,
the \textit{frame operator} of $\set{\bfphi_n}_{n\in\calN}$ is the composition
$\bfPhi\bfPhi^*:\bbH\rightarrow\bbH$ of its synthesis and analysis operators,
namely
$\bfPhi\bfPhi^*=\sum_{n\in\calN}\bfphi_n^{}\bfphi_n^*$,
$\bfPhi\bfPhi^*\bfy=\sum_{n\in\calN}\ip{\bfphi_n}{\bfy}\bfphi_n$.
The reverse composition is the $\calN\times\calN$ \textit{Gram matrix} that has
$(\bfPhi^*\bfPhi)(n,n')=\ip{\bfphi_n}{\bfphi_{n'}}$ as its $(n,n')$th entry.
This matrix has $\rank(\bfPhi^*\bfPhi)=\rank(\bfPhi)=\dim(\Span\set{\bfphi_n}_{n\in\calN})$ and is positive-semidefinite.
Conversely, any positive-semidefinite $\calN\times\calN$ matrix $\bfG$ factors as $\bfG=\bfPhi^*\bfPhi$ where $\bfPhi$ is the synthesis operator of a sequence  $\set{\bfphi_n}_{n\in\calN}$ that spans $\bbH$ where $\dim(\bbH)=\rank(\bfG)$.
Here, $\set{\bfphi_n}_{n\in\calN}$ and $\bbH$ are only unique up to unitary transformations,
meaning we can take $\bbH=\bbF^D$ if so desired, where $D=\rank(\bfG)$.

We say $\set{\bfphi_n}_{n\in\calN}$ is a ($C$-)\textit{tight frame} for $\bbH$ if $\bfPhi\bfPhi^*=C\bfI$ for some $C>0$.
By the polarization identity, this equates to having $\sum_{n\in\calN}\abs{\ip{\bfphi_n}{\bfy}}^2=\norm{\bfPhi^*\bfy}^2=C\norm{\bfy}^2$ for all $\bfy\in\bbH$.
An $\calN\times\calN$ self-adjoint matrix $\bfG$ is the Gram matrix $\bfPhi^*\bfPhi$ of a $C$-tight frame $\set{\bfphi_n}_{n\in\calN}$ for some space $\bbH$ if and only if $\bfG^2=C\bfG$,
namely when $\frac1C\bfG$ is an orthogonal projection operator.
In particular, $\Tr(\bfG)=CD$ where $D=\rank(\bfG)$.

We say $\set{\bfphi_n}_{n\in\calN}$ is a \textit{unit norm tight frame} (UNTF) for $\bbH$ if it is a tight frame for $\bbH$ and $\norm{\bfphi_n}=1$ for all $n$.
Here, we necessarily have $N=\Tr(\bfPhi^*\bfPhi)=CD$ where $D=\dim(\bbH)$.
As such, a sequence $\set{\bfphi_n}_{n\in\calN}$ of $N$ unit vectors in $\bbH$ is a UNTF for $\bbH$ if and only if
\begin{equation*}
\norm{\bfPhi\bfPhi^*-\tfrac ND\bfI}_{\Fro}^2
=\Tr[(\bfPhi\bfPhi^*-\tfrac ND\bfI)^2]
=\norm{\bfPhi^*\bfPhi}_{\Fro}^2-\tfrac{N^2}{D}
\end{equation*}
is zero.
That is, any $N$ unit vectors $\set{\bfphi_n}_{n\in\calN}$ in $\bbH$ satisfy
\begin{equation}
\label{eq.FP}
\tfrac{N^2}{D}
\leq\norm{\bfPhi^*\bfPhi}_{\Fro}^2
=\sum_{n\in\calN}\sum_{n'\in\calN}\abs{\ip{\bfphi_n}{\bfphi_{n'}}}^2,
\end{equation}
and achieve equality here if and only if they form a UNTF for $\bbH$.
When $\set{\bfphi_n}_{n\in\calN}$ is a UNTF for $\bbH$,
$\frac DN\bfPhi^*\bfPhi$ is an orthogonal projection operator with constant diagonal entries,
implying $\bfI-\frac DN\bfPhi^*\bfPhi$ is another such operator of rank $N-D$.
In particular, when this occurs with $N>D$,
$\frac{N}{N-D}\bfI-\frac{D}{N-D}\bfPhi^*\bfPhi$ is the Gram matrix
\smash{$\tilde{\bfPhi}^*\tilde{\bfPhi}$} of a UNTF
\smash{$\set{\tilde{\bfphi}_n}_{n\in\calN}$} for a space \smash{$\tilde{\bbH}$}
of dimension $N-D$.
Such a sequence \smash{$\set{\tilde{\bfphi}_n}_{n\in\calN}$} is called a \textit{Naimark complement} of $\set{\bfphi_n}_{n\in\calN}$.
Up to unitary transformations, it is uniquely defined according to
\begin{equation}
\label{eq.Naimark}
\ip{\tilde{\bfphi}_n}{\tilde{\bfphi}_{n'}}
=\left\{\begin{array}{cl}
1,&\ n=n',\\
-\tfrac{D}{N-D}\ip{\bfphi_n}{\bfphi_{n'}},&\ n\neq n'.\end{array}\right.
\end{equation}

Returning to \eqref{eq.FP}, we now note that bounding the off-diagonal terms of this sum by their maximum value gives
\begin{align}
\label{eq.Welch derivation 1}
\tfrac{N^2}{D}
&\leq\sum_{n\in\calN}\sum_{n'\in\calN}\abs{\ip{\bfphi_n}{\bfphi_{n'}}}^2\\
\label{eq.Welch derivation 2}
&\leq N+N(N-1)\max_{n\neq n'}\abs{\ip{\bfphi_n}{\bfphi_{n'}}}^2,
\end{align}
which equates to the Welch bound~\eqref{eq.Welch}.
Moreover, equality in~\eqref{eq.Welch} is equivalent to equality in both~\eqref{eq.Welch derivation 1} and~\eqref{eq.Welch derivation 2},
namely to when $\set{\bfphi_n}_{n\in\calN}$ is a UNTF for $\bbH$ that also happens to be equiangular, namely an ETF for $\bbH$.
In particular, equality holds in~\eqref{eq.Welch} if and only if $\abs{\ip{\bfphi_n}{\bfphi_{n'}}}^2=\frac{N-D}{D(N-1)}$ for all $n\neq n'$,
and in this case, $\set{\bfphi_n}_{n\in\calN}$ is necessarily a UNTF for $\bbH$.
By \eqref{eq.Naimark}, the Naimark complement of an $\ETF(D,N)$ is an $\ETF(N-D,N)$, a fact we will use often.

\subsection{Harmonic frames and relative difference sets}

A \textit{character} of a finite abelian group $\calG$ is a homomorphism
$\gamma:\calG\rightarrow\bbT:=\set{z\in\bbC: \abs{z}=1}$.
The set of all such characters is known as the \textit{(Pontryagin) dual} $\hat{\calG}$ of $\calG$, which is itself a group under pointwise multiplication.
In fact, since $\calG$ is finite,
$\hat{\calG}$ is known to be isomorphic to $\calG$.
The synthesis operator $\bfGamma$ of the characters of $\calG$ is a square $\calG\times\hat{\calG}$ matrix having $\bfGamma(g,\gamma)=\gamma(g)$ for all $g$ and $\gamma$.
$\bfGamma$ is often called the \textit{character table} of $\calG$,
and its adjoint $\bfGamma^*:\bbC^{\calG}\rightarrow\bbC^{\hat{\calG}}$,
$(\bfGamma^*\bfy)(\gamma)=\ip{\gamma}{\bfy}$ (the analysis operator of the characters) is the \textit{discrete Fourier transform} (DFT) over $\calG$.
Since $\calG$ is finite, it is known that its characters form an equal-norm orthogonal basis for $\bbC^\calG$,
and so $\bfGamma^{-1}=\frac1G\bfGamma^*$ where $G=\#(\calG)$.
In particular $\bfGamma\bfGamma^*=G\bfI$.

For any $\calD\subseteq\calG$ with $D=\#(\calD)>0$,
let $\bfPsi$ be the synthesis operator of the corresponding \textit{harmonic frame} $\set{\bfpsi_\gamma}_{\gamma\in\hat{\calG}}$, that is, the normalized restrictions of the characters of $\calG$ to $\calD$:
\begin{equation}
\label{eq.harmonic frame}
\bfPsi\in\bbC^{\calD\times\hat{\calG}},
\quad
\bfPsi(d,\gamma)
=\bfpsi_\gamma(d)
:=D^{-\frac12}\gamma(d).
\end{equation}
Any such frame is automatically a UNTF:
for any $d_1,d_2\in\calD$,
\begin{equation*}
(\bfPsi\bfPsi^*)(d_1,d_2)
=\tfrac1D\sum_{\gamma\in\hat{\calG}}[\gamma(d_1)]^*\gamma(d_2)
=\tfrac1D(\bfGamma\bfGamma^*)(d_1,d_2),
\end{equation*}
and so $\bfPsi\bfPsi^*=\tfrac1D\bfGamma\bfGamma^*=\tfrac GD\bfI$.
Meanwhile, for any $\gamma_1,\gamma_2\in\hat{\calG}$,
the corresponding entry of the Gram matrix is
\begin{align*}
\ip{\bfpsi_{\gamma_1}}{\bfpsi_{\gamma_2}}
&=\tfrac1D\sum_{d\in\calD}[\gamma_1(d)]^*\gamma_2(d)\\
&=\tfrac1D\sum_{g\in\calG}[(\gamma_1^{}\gamma_2^{-1}(g)]^*\bfchi_\calD(g)\\
&=\tfrac1D(\bfGamma^*\bfchi_\calD)(\gamma_1^{}\gamma_2^{-1}),
\end{align*}
where $\bfchi_\calD\in\bbC^\calG$ is the $\set{0,1}$-valued characteristic (indicator) function of $\calD$.
In particular,
\begin{equation}
\label{eq.RDS 0}
\abs{\ip{\bfpsi_{\gamma_1}}{\bfpsi_{\gamma_2}}}^2
=\tfrac1{D^2}\abs{(\bfGamma^*\bfchi_\calD)(\gamma_1^{}\gamma_2^{-1})}^2,
\quad\forall\,\gamma_1,\gamma_2\in\hat{\calG}.
\end{equation}
To continue,
we exploit the fact that the DFT $\bfGamma^*$ distributes over \textit{convolution}:
for any $\bfy_1,\bfy_2\in\bbC^\calG$, defining $\bfy_1*\bfy_2\in\bbC^\calG$ by $(\bfy_1*\bfy_2)(g):=\sum_{g'\in\calG}\bfy_1(g')\bfy_2(g-g')$,
we have
\begin{equation*}
[\bfGamma^*(\bfy_1*\bfy_2)](\gamma)
=(\bfGamma^*\bfy_1)(\gamma)(\bfGamma^*\bfy_2)(\gamma),
\quad\forall\,\gamma\in\hat{\calG}.
\end{equation*}
(When considering such $\calG$ in general, we default to writing the group operation on $\calG$ and its dual $\hat{\calG}$ as addition and multiplication, respectively.)
Meanwhile, the DFT of the \textit{involution} $\tilde{\bfy}\in\bbC^\calG$ of $\bfy\in\bbC^\calG$, $\tilde{\bfy}(g):=[\bfy(-g)]^*$ is
$(\bfGamma^*\tilde{\bfy})(\gamma)=[(\bfGamma^*\bfy)(\gamma)]^*$ for all $\gamma\in\hat{\calG}$.
Combined, we have
\begin{equation*}
\abs{(\bfGamma^*\bfchi_\calD)(\gamma)}^2
=[\bfGamma^*(\bfchi_\calD*\tilde{\bfchi}_\calD)](\gamma),
\quad\forall\,\gamma\in\hat{\calG}.
\end{equation*}
Here, $\bfchi_\calD*\tilde{\bfchi}_\calD$ is the \textit{autocorrelation} of $\bfchi_\calD$,
which counts the number of distinct ways that any given $g\in\calG$ can be written as a difference of members of $\calD$:
\begin{align*}
(\bfchi_\calD*\tilde{\bfchi}_\calD)(g)
&=\sum_{g'\in\calG}\bfchi_\calD(g')\tilde{\bfchi}_\calD(g-g')\\
&=\sum_{g'\in\calG}\bfchi_\calD(g')\bfchi_{g+\calD}(g')\\
&=\#\set{\calD\cap(g+\calD)}\\
&=\#\set{(d,d')\in\calD\times\calD: g=d-d'}.
\end{align*}
Altogether, we see that there is a relationship between the combinatorial properties of the differences $d-d'$ of members of $\calD$ and the magnitudes of the inner products of vectors that belong to the corresponding harmonic frame.
This relationship has long been exploited~\cite{Turyn65} to characterize certain types of $\calD$ including, as we now explain, relative difference sets:

\begin{definition}
\label{def.RDS}
Let $\calH$ be an $H$-element subgroup of an abelian group $\calG$ of order $G$.
A $D$-element subset $\calD$ of $\calG$ is an $\calH$-RDS for $\calG$
if there exists a constant $\Lambda$ such that
\begin{equation}
\label{eq.RDS 1}
\bfchi_\calD*\tilde{\bfchi}_\calD
=\Lambda(\bfchi_\calG-\bfchi_\calH)+D\bfdelta_0,
\end{equation}
namely if no nonzero member of $\calH$ is a difference of two members of $\calD$
while every member of $\calH^\rmc$ can be written as a difference of members of $\calD$ in exactly $\Lambda$ ways.
\end{definition}

In the literature, an RDS with these parameters is usually denoted as an ``$\RDS(N,H,D,\Lambda)$" where $N=\frac{G}{H}$.
In the special case where $\calH=\set{0}$, an $\calH$-RDS for $\calG$ is simply called a \textit{difference set} for $\calG$.
To proceed, we use the \textit{Poisson summation formula},
namely that $\bfGamma^*\bfchi_\calH=H\bfchi_{\calH^\perp}$
where $\calH^\perp:=\set{\gamma\in\hat{\calG}: \gamma(h)=1,\ \forall\,h\in\calH}$ is the \textit{annihilator} of $\calH$,
which is a subgroup of $\hat{\calG}$ that is isomorphic to $\calG/\calH$.
(``The DFT of a comb is a comb.")
In particular,
taking the DFT of~\eqref{eq.RDS 1} gives that $\calD$ is an $\calH$-RDS for $\calG$ if and only if
\begin{equation}
\label{eq.RDS 2}
\abs{(\bfGamma^*\bfchi_\calD)(\gamma)}^2
=\Lambda[G\bfdelta_1(\gamma)-H\bfchi_{\calH^\perp}(\gamma)]+D,
\ \forall\,\gamma\in\hat{\calG}.
\end{equation}
Here, evaluating \eqref{eq.RDS 2} at $\gamma=1$ gives
$D^2=\Lambda(G-H)+D$,
namely that $\Lambda=\frac{D(D-1)}{G-H}=\frac{D(D-1)}{H(N-1)}$;
this also follows from a simple counting argument.
As such,
\begin{equation*}
D-\Lambda H
=D-\tfrac{D(D-1)}{N-1}
=\tfrac{D(N-D)}{N-1}
\end{equation*}
and so~\eqref{eq.RDS 2} equates to having
\begin{equation*}
\abs{(\bfGamma^*\bfchi_\calD)(\gamma)}^2
=\left\{\begin{array}{cl}
\frac{D(N-D)}{N-1},&\ \gamma\in\calH^\perp,\gamma\neq1,\smallskip\\
D,&\ \gamma\notin\calH^\perp.
\end{array}\right.
\end{equation*}
In light of \eqref{eq.RDS 0},
we see that $\calD$ is an $\calH$-RDS for $\calG$ if and only if
the corresponding harmonic frame~\eqref{eq.harmonic frame} satisfies
\begin{equation}
\label{eq.RDS 3}
\abs{\ip{\bfpsi_{\gamma_1}}{\bfpsi_{\gamma_2}}}^2
=\left\{\begin{array}{cl}
\frac{N-D}{D(N-1)},&\ \gamma_1^{}\gamma_2^{-1}\in\calH^\perp,\ \gamma_1\neq\gamma_2,\smallskip\\
\frac1D,&\ \gamma_1^{}\gamma_2^{-1}\notin\calH^\perp.
\end{array}\right.
\end{equation}
In the special case where $\calH=\set{0}$,
the second condition above becomes vacuous,
and this result reduces to the equivalence between difference sets and harmonic ETFs given in~\cite{XiaZG05,DingF07}.

\subsection{Positive and negative ETFs}
\label{subsec.pos neg}

In Section~\ref{sec.new ETFs},
we show that, in certain circumstances,
one can construct an $\ETF(D_1D_2,N_1N_2)$ from an $\ETF(D_1,N_1)$ and $N_1$ mutually unbiased $\ETF(D_2,N_2)$.
It turns out that this technique applies to many distinct types of $\ETF(D_1,N_1)$, including Naimark complements of Steiner ETFs~\cite{FickusMT12} and Tremain ETFs~\cite{FickusJMP18}, as well as polyphase BIBD ETFs~\cite{FickusJMPW19}.
Here, to prevent duplication of effort,
it helps to have the following concepts from~\cite{FickusJ19},
which unite the $(D_1,N_1)$ parameters of these disparate ETFs into a common framework:

\begin{definition}
\label{def.pos neg ETFs}
For any $\ETF(D,N)$ with $N>D>1$, let
\begin{equation*}
L\in\set{1,-1},
\quad
S:=\bigbracket{\tfrac{D(N-1)}{N-D}}^{\frac12},
\quad
K:=\tfrac{NS}{D(S+L)}.
\end{equation*}
When $S,K\in\bbZ$, we say this ETF is \textit{type} $(K,L,S)$.
In this case, depending on whether $L$ is $1$ or $-1$,
we also refer to such an ETF as being ($K$-)\textit{positive} or ($K$-)\textit{negative}, respectively.
\end{definition}

We caution that some ETFs are both positive and negative:
for example, the well-known $\ETF(3,9)$ is both $2$-positive and $6$-negative, being both of type $(2,1,2)$ and $(6,-1,2)$.
Regardless,
for any $\ETF(D,N)$ of type $(K,L,S)$,
Theorem~3.1 of~\cite{FickusJ19} gives expressions for $(D,N)$ in terms of $(K,L,S)$:
\begin{align}
\label{eq.pos neg D}
D
&=\tfrac{S}{K}[S(K-1)+L]\\
\label{eq.pos neg N}
N
&=(S+L)[S(K-1)+L].
\end{align}
As summarized in~\cite{FickusJ19},
almost all currently known constructions of $\ETF(D,N)$ with $N>2D>2$ are either positive or negative,
with the only exceptions being certain SIC-POVMs, harmonic ETFs, and examples where $N=2D+1$.
As summarized in Theorems~1.2, 4.1 and 4.2 of~\cite{FickusJ19},
an ETF of type $(K,L,S)$ exists whenever either:
\begin{enumerate}
\renewcommand{\labelenumi}{(\alph{enumi})}
\item
$(K,L,S)=(1,1,S)$ where $S\geq 2$ (regular simplices);\smallskip
\item
$(K,L,S)=(K,1,S)$ and a $\BIBD(V,K,1)$ exists where $V=(K-1)S+1$ (Steiner ETFs~\cite{FickusMT12}), including:
\begin{enumerate}
\renewcommand{\labelenumii}{(\roman{enumii})}
\item
when $K=2,3,4,5$, $S\geq K$, $S\equiv 0,1\bmod K$,
\item
when $K\mid S(S-1)$ and $S$ is sufficiently large;\smallskip
\end{enumerate}
\item
$(K,L,S)=(Q,1,Q)$ where $Q$ is any prime power
(Naimark complements of polyphase BIBD ETFs~\cite{FickusJMPW19});\smallskip
\item
$(K,L,S)=(2,-1,S)$ where $S\geq 3$ ($\ETF(D,N)$ with $D=\frac12(N+\sqrt{N})$);\smallskip
\item
$(K,L,S)=(3,-1,S)$ where $S\geq2$, $S\equiv0,2\bmod 3$
(Tremain ETFs~\cite{FickusJMP18});\smallskip
\item
$(K,L,S)=(Q+1,-1,Q+1)$ where $Q$ is an even prime power
(hyperoval ETFs~\cite{FickusMJ16});\smallskip
\item
$(K,L,S)=(4,-1,S)$ where $S\equiv3\bmod8$~\cite{FickusJ19}.
\end{enumerate}
This is not a comprehensive list:
for the sake of brevity and clarity,
we have omitted some infinite families of negative ETFs that are either also (postive) Steiner ETFs or are overly technical
(see, for example, Theorems~1.2, 1.3 and 4.4 of~\cite{FickusJ19} for some additional $K$-negative ETFs with $K=4,5,6,7,10,12,15$),
as well as a finite number of positive and/or negative ETFs for which, it turns out, our theory below does not apply.

\section{Mutually Unbiased ETFs}
\label{sec.MUETFs}

Let $\calM$, $\calN$ and $\calD$ be sets of cardinality $M$, $N$ and $D$, respectively, where $M\geq1$, $N\geq D\geq1$.
For each $m\in\calM$, let $\set{\bfpsi_{m,n}}_{n\in\calN}$ be an ETF for $\bbF^\calD$ with synthesis operator $\bfPsi_m$.
When $N=D$, this equates to a collection of $M$ orthonormal bases for $\bbF^\calD$;
in quantum information theory, one says that such bases are \textit{mutually unbiased} if
\smash{$\abs{\ip{\bfpsi_{m,n}}{\bfpsi_{m',n'}}}^2=\frac1D$} whenever $m\neq m'$.
As we now explain, this same condition in general ensures that the concatenation \smash{$\set{\bfpsi_{m,n}}_{m\in\calM,n\in\calN}$} of these ETFs has minimal coherence.
This concatenation is an $MN$-vector UNTF for $\bbF^\calD$,
as its synthesis operator $\bfPsi$ satisfies
\begin{equation*}
\bfPsi\bfPsi^*
=\sum_{m\in\calM}\sum_{n\in\calN}\bfpsi_{m,n}^{}\bfpsi_{m,n}^*
=\sum_{m\in\calM}\bfPsi_m^{}\bfPsi_m^*
=\tfrac{MN}{D}\bfI.
\end{equation*}
Thus~\eqref{eq.FP} gives \smash{$\norm{\bfPsi^*\bfPsi}_{\Fro}^2=\frac{M^2N^2}{D}$}.
Here, $\bfPsi^*\bfPsi$ is an $\calM\times\calM$ block matrix whose $(m,m')$th block
is the $\calN\times\calN$ \textit{cross-Gram} matrix $\bfPsi_m^*\bfPsi_{m'}^{}$ whose $(n,n')$th entry is
$(\bfPsi_m^*\bfPsi_{m'}^{})(n,n')=\ip{\bfpsi_{m,n}}{\bfpsi_{m',n'}}$.
Thus,
\begin{equation*}
\tfrac{M^2N^2}{D}
=\norm{\bfPsi^*\bfPsi}_{\Fro}^2
=\sum_{m\in\calM}\sum_{m'\in\calM}\norm{\bfPsi_m^*\bfPsi_{m'}^{}}_\Fro^2.
\end{equation*}
Moreover, for any $m\in\calM$, $\set{\bfpsi_{m,n}}_{n\in\calN}$ is a UNTF for $\bbF^\calD$ and so~\eqref{eq.FP} gives $\norm{\bfPsi_m^*\bfPsi_m^{}}_\Fro^2=\tfrac{N^2}{D}$.
Subtracting these $M$ diagonal-block terms from the previous equation gives
\begin{align*}
\tfrac{M(M-1)N^2}{D}
&=\sum_{m\in\calM}\sum_{m'\neq m}\norm{\bfPsi_m^*\bfPsi_{m'}^{}}_\Fro^2\\
&\leq M(M-1)N^2
\max_{\substack{m\neq m'\\n,n'\in\calN}}\abs{\ip{\bfpsi_{m,n}}{\bfpsi_{m',n'}}}^2,
\end{align*}
where equality holds if and only if $\abs{\ip{\bfpsi_{m,n}}{\bfpsi_{m',n'}}}^2=\frac1D$ whenever $m\neq m'$.
Since for every $m\in\calM$ we further have that $\abs{\ip{\bfpsi_{m,n}}{\bfpsi_{m,n'}}}^2=\frac{N-D}{D(N-1)}\leq\frac1D$ for all $n\neq n'$,
this is actually a lower bound on the coherence of the concatenation of any $M$ $\ETF(D,N)$ for $\bbF^\calD$.
This motivates the following:

\begin{definition}
\label{def.MUETF}
Let $\calM$, $\calN$ and $\calD$ be sets of cardinality $M\geq1$ and $N\geq D\geq1$, respectively.
A sequence $\set{\bfpsi_{m,n}}_{m\in\calM,n\in\calN}$ of unit vectors in $\bbF^\calD$ is a \textit{mutually-unbiased-equiangular tight frame} (MUETF) for $\bbF^\calD$ if
\begin{equation}
\label{eq.def of MUETF}
\abs{\ip{\bfpsi_{m,n}}{\bfpsi_{m',n'}}}^2
=\left\{\begin{array}{cl}
\tfrac{N-D}{D(N-1)},&\ m=m', n\neq n',\smallskip\\
\frac1D,&\ m\neq m'.\end{array}\right.
\end{equation}
\end{definition}

We often denote an MUETF that consists of $M$ mutually unbiased $\ETF(D,N)$ as an ``$\MUETF(D,N,M)$".
In the special case where $N=D$, such an MUETF equates to a collection of $M$ \textit{mutually unbiased bases} (MUBs) for $\bbF^\calD$.
If instead $N=D+1$, this equates to $M$ \textit{mutually unbiased simplices} (MUSs) for $\bbF^\calD$~\cite{FickusS20,Schmitt19}.

We now derive an upper bound on the number $M$ of mutually unbiased $\ETF(D,N)$ that can exist.
Any unit vector $\bfphi$ in $\bbF^\calD$ ``lifts" to a rank-one orthogonal projection operator $\bfphi\bfphi^*$,
and the Frobenius inner product of any two such operators is
\begin{equation*}
\ip{\bfphi_{1}^{}\bfphi_{1}^*}{\bfphi_{2}^{}\bfphi_{2}^*}_{\Fro}
=\Tr(\bfphi_{1}^{}\bfphi_{1}^*\bfphi_{2}^{}\bfphi_{2}^*)
=\abs{\ip{\bfphi_{1}}{\bfphi_{2}}}^2.
\end{equation*}
In particular, if $\set{\bfpsi_{m,n}}_{m\in\calM,n\in\calN}$ is an MUETF for $\bbF^\calD$,
then the Gram matrix of the lifted vectors $\set{\bfpsi_{m,n}^{}\bfpsi_{m,n}^*}_{m\in\calM,n\in\calN}$
is the entrywise-modulus-squared $\abs{\bfPsi^*\bfPsi}^2$ of the Gram matrix $\bfPsi^*\bfPsi$ of $\set{\bfpsi_{m,n}}_{m\in\calM,n\in\calN}$.
Here, \eqref{eq.def of MUETF} implies that every off-diagonal block of  $\abs{\bfPsi^*\bfPsi}^2$ is $\frac1D\bfJ_\calN$
(where $\bfJ_\calN$ denotes an all-ones $\calN\times\calN$ matrix) and that every diagonal block of $\abs{\bfPsi^*\bfPsi}^2$ is $\tfrac{(D-1)N}{D(N-1)}\bfI_\calN+\tfrac{N-D}{D(N-1)}\bfJ_\calN$.
This equates to having
\begin{equation*}
\abs{\bfPsi^*\bfPsi}^2
=\tfrac{(D-1)N}{D(N-1)}\bfI
-[\tfrac{D-1}{D(N-1)}\bfI-\tfrac1D\bfJ_{\calM}]\otimes\bfJ_{\calN}.
\end{equation*}
Diagonalizing this matrix reveals that it has eigenvalues $\tfrac{(D-1)N}{D(N-1)}$, $0$ and $\tfrac{MN}{D}$ with multiplicities $M(N-1)$, $M-1$ and $1$, respectively.
In particular, $\abs{\bfPsi^*\bfPsi}^2$ has rank $M(N-1)+1$.
At the same time, the operators $\set{\bfpsi_{m,n}^{}\bfpsi_{m,n}^*}_{m\in\calM,n\in\calN}$ lie in the real inner product space of all self-adjoint $\calD\times\calD$ matrices,
meaning the rank of their Gram matrix $\abs{\bfPsi^*\bfPsi}^2$ is at most the dimension of this space, namely $D^2$ when the underlying field $\bbF$ is $\bbC$,
and $\frac12D(D+1)$ when $\bbF=\bbR$.
That is,
\begin{equation}
\label{eq.Gerzon}
M(N-1)+1\leq\left\{\begin{array}{cl}D^2,&\ \bbF=\bbC,\\\frac12D(D+1),&\ \bbF=\bbR. \end{array}\right.
\end{equation}
When $M=1$,
this reduces to a necessary condition on $\ETF(D,N)$ known as Gerzon's bound.
More generally, solving for $M$ in the above inequality gives the following result:

\begin{theorem}
\label{thm.Gerzon}
If an $\MUETF(D,N,M)$ exists and $N>1$ then
\begin{equation*}
M\leq\left\{\begin{array}{cl}
\lfloor\frac{D^2-1}{N-1}\rfloor,&\ \bbF=\bbC,\smallskip\\
\lfloor\frac{(D-1)(D+2)}{2(N-1)}\rfloor,&\ \bbF=\bbR.
\end{array}\right.
\end{equation*}
\end{theorem}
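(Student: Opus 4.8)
The plan is to obtain the stated inequalities directly from \eqref{eq.Gerzon}, which has already been established in the discussion preceding the theorem statement. Recall the substance of that argument: each unit vector $\bfpsi_{m,n}$ lifts to the rank-one projection $\bfpsi_{m,n}^{}\bfpsi_{m,n}^*$, and since $\ip{\bfphi_1^{}\bfphi_1^*}{\bfphi_2^{}\bfphi_2^*}_\Fro=\abs{\ip{\bfphi_1}{\bfphi_2}}^2$, the Frobenius Gram matrix of these $MN$ lifted operators is exactly $\abs{\bfPsi^*\bfPsi}^2$. Plugging the MUETF relations \eqref{eq.def of MUETF} into this matrix yields a closed form whose eigenvalues and multiplicities can be read off after diagonalization, giving $\rank(\abs{\bfPsi^*\bfPsi}^2)=M(N-1)+1$. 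Bounding this rank by the dimension of the real vector space of self-adjoint $\calD\times\calD$ matrices — namely $D^2$ when $\bbF=\bbC$ and $\tfrac12 D(D+1)$ when $\bbF=\bbR$ — is precisely \eqref{eq.Gerzon}. So, assuming \eqref{eq.Gerzon}, the only thing left to do is solve it for $M$.

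Granting \eqref{eq.Gerzon}, the theorem is pure bookkeeping. Since $N>1$ we may divide by $N-1>0$, and since $M$ is a positive integer we may take floors. Over $\bbC$, \eqref{eq.Gerzon} reads $M(N-1)\le D^2-1$, whence $M\le\tfrac{D^2-1}{N-1}$ and therefore $M\le\lfloor\tfrac{D^2-1}{N-1}\rfloor$. Over $\bbR$, \eqref{eq.Gerzon} reads $M(N-1)+1\le\tfrac12 D(D+1)$, i.e.\ $M(N-1)\le\tfrac12 D(D+1)-1$; the single identity worth recording is
\[
\tfrac12 D(D+1)-1=\tfrac12(D^2+D-2)=\tfrac12(D-1)(D+2),
\]
so $M\le\tfrac{(D-1)(D+2)}{2(N-1)}$, and integrality of $M$ again lets us pass to the floor. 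This gives exactly the two bounds in the statement.

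There is no genuine obstacle in the step being proved: all of the real content lives in the rank computation for $\abs{\bfPsi^*\bfPsi}^2$, which the preceding text has already carried out, and the passage from \eqref{eq.Gerzon} to the theorem is a one-line manipulation whose only non-obvious ingredient is the factorization $\tfrac12 D(D+1)-1=\tfrac12(D-1)(D+2)$. The only points that warrant any care are purely formal: that $M$ is a positive integer (so that flooring is valid and the bound is meaningful), and that division by $N-1$ is legitimate — which is exactly why the hypothesis $N>1$ appears.
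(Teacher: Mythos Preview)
Your proposal is correct and follows exactly the paper's approach: the paper derives \eqref{eq.Gerzon} in the text preceding the theorem and then simply says ``solving for $M$ in the above inequality gives the following result,'' which is precisely the bookkeeping you carry out (including the factorization $\tfrac12 D(D+1)-1=\tfrac12(D-1)(D+2)$ implicit in the real case).
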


In the special case where $N=D$, this reduces to the classical upper bound on the maximal number of MUBs, namely that $M\leq N+1$ when $\bbF=\bbC$ and that
$M\leq\lfloor\frac D2\rfloor+1$ when $\bbF=\bbR$.
In the special case where $N=D+1$, this reduces to a recently derived upper bound on the maximal number of MUSs~\cite{Schmitt19}.
As detailed below, in at least these two special cases,
there are an infinite number of values of $D$ for which these bounds can be achieved.

\subsection{Constructing MUETFs from relative difference sets}

From Section~\ref{sec.background},
recall that restricting and then normalizing the characters of any abelian group $\calG$ of order $G$ to a nonempty $D$-element subset $\calD$ of $\calG$ yields a harmonic UNTF $\set{\bfpsi_\gamma}_{\gamma\in\hat{\calG}}$, \smash{$\bfpsi_\gamma(d):=D^{-\frac12}\gamma(d)$} for $\bbC^\calD$.
Further recall that for any subgroup $\calH$ of $\calG$ of order $H$,
such a subset $\calD$ is an $\calH$-$\RDS(N,H,D,\Lambda)$ (Definition~\ref{def.RDS}) if and only if this harmonic UNTF satisfies~\eqref{eq.RDS 3} where $N=\frac{G}{H}$.
Comparing~\eqref{eq.RDS 3} to~\eqref{eq.def of MUETF} immediately gives that such a harmonic UNTF yields a (harmonic) MUETF, where each individual ETF is indexed by a coset of $\calH^\perp$.
To formalize this connection,
we abuse notation, letting the indexing ``$\alpha\in\hat{\calG}/\calH^\perp$" denote letting $\alpha$ vary over any particular transversal (set of coset representatives) of $\calH^\perp$ with respect to $\hat{\calG}$.
Doing so permits us to uniquely factor any $\gamma\in\hat{\calG}$ as $\gamma=\alpha\beta$ where $\beta\in\calH^\perp$,
at which point comparing~\eqref{eq.RDS 3} to~\eqref{eq.def of MUETF} gives:

\begin{theorem}
\label{thm.RDS gives MUETF}
Letting $\calH$ and $\calD$ be a subgroup and nonempty subset of a finite abelian group $\calG$, respectively, the sequence
\begin{equation*}
\set{\bfpsi_{\alpha,\beta}}_{\alpha\in\hat{\calG}/\calH^\perp,\,\beta\in\calH^\perp}
\subseteq\bbC^\calD,
\quad\bfpsi_{\alpha,\beta}(d):=D^{-\frac12}\alpha(d)\beta(d),
\end{equation*}
is an $\MUETF(D,N,H)$ for $\bbC^\calD$ (Definition~\ref{def.MUETF}) if and only if $\calD$ is an $\calH$-$\RDS(N,H,D,\Lambda)$ for $\calG$ (Definition~\ref{def.RDS}).
\end{theorem}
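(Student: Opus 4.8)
The plan is essentially to unwind the two definitions and match them term-by-term, since the excerpt has already done most of the analytic work. The key observation is that equation~\eqref{eq.RDS 3} already characterizes when $\calD$ is an $\calH$-$\RDS(N,H,D,\Lambda)$ in terms of the harmonic UNTF $\set{\bfpsi_\gamma}_{\gamma\in\hat{\calG}}$: the squared inner product $\abs{\ip{\bfpsi_{\gamma_1}}{\bfpsi_{\gamma_2}}}^2$ equals $\frac{N-D}{D(N-1)}$ when $\gamma_1^{}\gamma_2^{-1}\in\calH^\perp$ with $\gamma_1\neq\gamma_2$, and equals $\frac1D$ when $\gamma_1^{}\gamma_2^{-1}\notin\calH^\perp$. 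So the whole content is a relabeling: reindex the dual group $\hat{\calG}$ via the bijection $\hat{\calG}\to(\hat{\calG}/\calH^\perp)\times\calH^\perp$, $\gamma\mapsto(\alpha,\beta)$ coming from the chosen transversal, set $\bfpsi_{\alpha,\beta}:=\bfpsi_{\alpha\beta}$, and check that the case split in~\eqref{eq.RDS 3} becomes exactly the case split in~\eqref{eq.def of MUETF}.

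First I would fix a transversal of $\calH^\perp$ in $\hat\calG$ and note the elementary fact (using that $\calH^\perp$ is a subgroup of the abelian group $\hat\calG$) that every $\gamma\in\hat\calG$ has a unique expression $\gamma=\alpha\beta$ with $\alpha$ in the transversal and $\beta\in\calH^\perp$; this gives the claimed formula $\bfpsi_{\alpha,\beta}(d)=D^{-\frac12}\alpha(d)\beta(d)=D^{-\frac12}(\alpha\beta)(d)=\bfpsi_{\alpha\beta}(d)$, so the proposed family is literally the harmonic frame $\set{\bfpsi_\gamma}_{\gamma\in\hat\calG}$ with its index set renamed. In particular it consists of $\#(\hat\calG)=G=HN$ unit vectors in $\bbC^\calD$, organized into $H$ blocks (one per transversal element $\alpha$) of $N=\#(\calH^\perp)$ vectors each, which matches the shape required of an $\MUETF(D,N,H)$.

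Next I would do the case analysis. Given $(\alpha,\beta)$ and $(\alpha',\beta')$, write $\gamma_1=\alpha\beta$, $\gamma_2=\alpha'\beta'$, so $\gamma_1^{}\gamma_2^{-1}=(\alpha\alpha'^{-1})(\beta\beta'^{-1})$. By uniqueness of the transversal factorization, $\gamma_1^{}\gamma_2^{-1}\in\calH^\perp$ if and only if $\alpha=\alpha'$ (the transversal part of $\gamma_1^{}\gamma_2^{-1}$ must be trivial), and when $\alpha=\alpha'$ we further have $\gamma_1=\gamma_2$ iff $\beta=\beta'$. Feeding this into~\eqref{eq.RDS 3}: the regime ``$\gamma_1^{}\gamma_2^{-1}\in\calH^\perp$, $\gamma_1\neq\gamma_2$'' becomes ``$\alpha=\alpha'$, $\beta\neq\beta'$'' with value $\frac{N-D}{D(N-1)}$, and the regime ``$\gamma_1^{}\gamma_2^{-1}\notin\calH^\perp$'' becomes ``$\alpha\neq\alpha'$'' with value $\frac1D$ — which is exactly~\eqref{eq.def of MUETF} with $M=H$ and the block index $m$ playing the role of $\alpha$. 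Since~\eqref{eq.RDS 3} holds for the harmonic frame if and only if $\calD$ is an $\calH$-$\RDS(N,H,D,\Lambda)$, and since~\eqref{eq.def of MUETF} is by definition what it means for $\set{\bfpsi_{\alpha,\beta}}$ to be an $\MUETF(D,N,H)$, the ``if and only if'' follows.

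There is no real obstacle here; the proof is a bookkeeping argument and the only thing to be careful about is the direction of the equivalence. The forward/backward passage through~\eqref{eq.RDS 3} is genuinely an equivalence (every step in the excerpt deriving~\eqref{eq.RDS 3} from Definition~\ref{def.RDS} — Poisson summation, the DFT of~\eqref{eq.RDS 1}, the computation of $\Lambda$, and~\eqref{eq.RDS 0} — is reversible), so one should state explicitly that~\eqref{eq.RDS 3} is a characterization rather than a mere consequence, and then the MUETF property is obtained by transporting that characterization through the (bijective, value-preserving) reindexing $\gamma\leftrightarrow(\alpha,\beta)$. The mildly delicate point worth a sentence is independence of the conclusion from the choice of transversal: different transversals give the same frame up to relabeling the blocks, hence the same MUETF property, because the value of $\abs{\ip{\bfpsi_{\gamma_1}}{\bfpsi_{\gamma_2}}}^2$ depends on $\gamma_1,\gamma_2$ only through whether $\gamma_1^{}\gamma_2^{-1}\in\calH^\perp$ and whether $\gamma_1=\gamma_2$, both of which are transversal-independent.
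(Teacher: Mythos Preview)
Your proposal is correct and follows essentially the same approach as the paper: the paper's proof is literally the paragraph preceding the theorem, which derives the characterization~\eqref{eq.RDS 3} and then says that ``comparing~\eqref{eq.RDS 3} to~\eqref{eq.def of MUETF}'' after reindexing $\hat{\calG}$ by a transversal of $\calH^\perp$ gives the result. You have simply made that comparison explicit (and added the remarks on reversibility and transversal-independence), which is entirely appropriate.
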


In general, we refer to any MUETF created by Theorem~\ref{thm.RDS gives MUETF} as a \textit{harmonic} MUETF.
In the special case where $\calH=\set{0}$,
Theorem~\ref{thm.RDS gives MUETF} reduces to the known equivalence between harmonic ETFs and difference sets~\cite{XiaZG05,DingF07}.
Meanwhile, in the special case where $D=N$,
Theorem~\ref{thm.RDS gives MUETF} converts any $\calH$-$\RDS(D,H,D,\Lambda)$ into $H$ MUBs for $\bbC^\calD$ in a manner identical to that of~\cite{GodsilR09}.
If instead $N=D+1$,
Theorem~\ref{thm.RDS gives MUETF} yields $H$ MUSs for $\bbC^\calD$~\cite{FickusS20,Schmitt19}.
As we shall see, Theorem~\ref{thm.RDS gives MUETF} is a true generalization of these previously known results,
yielding infinite numbers of MUETFs that do not belong to any one of these three special categories.

Moving forward, it helps to note that if $\calD$ is any $\calH$-RDS for $\calG$,
then by a simple counting argument,
quotienting it by any subgroup $\calK$ of $\calH$ produces an $(\calH/\calK)$-RDS for $\calG/\calK$,
namely $\calD/\calK:=\set{d+\calK\in\calG/\calK: d\in\calD}$~\cite{Pott95}.
In particular, if $\calK$ has order $K$,
doing so transforms an $\RDS(N,H,D,\Lambda)$ into an $\RDS(N,\frac{H}{K},D,K\Lambda)$.
As an extreme case, quotienting an $\calH$-RDS by $\calK=\calH$ yields a difference set for $\calG/\calH$.
From the perspective of Theorem~\ref{thm.RDS gives MUETF},
these are special cases of more general ideas,
namely that any subcollection of $M$ mutually unbiased $\ETF(D,N)$ are still mutually unbiased, and that any single one of them is an ETF.

\begin{remark}
\label{rem.harmonic MUETFF}
On a related note,
the particular $\ETF(D,N)$ that arises from Theorem~\ref{thm.RDS gives MUETF} by taking $\alpha=1$ is identical to the harmonic ETF that arises from the difference set $\calD/\calH$ for $\calG/\calH$.
To elaborate,
for any $\beta\in\calH^\perp$ and $d\in\calD$,
\begin{equation}
\label{eq.RDS 4}
\bfpsi_{1,\beta}(d)
=D^{-\frac12}1(d)\beta(d)
=D^{-\frac12}\beta(d).
\end{equation}
At the same time, $\calH^\perp$ is naturally identified with the Pontryagin dual of $\calG/\calH$ via the isomorphism that maps any given $\gamma\in\calH^\perp$ to the character $g+\calH\mapsto\gamma(g)$.
Under this identification,
evaluating the $\beta$th member of the harmonic ETF arising from $\calD/\calH$ at $d+\calH$ gives~\eqref{eq.RDS 4}.
For this reason,
for any $\calH$-RDS $\calD$ for $\calG$,
we usually regard $\set{\bfpsi_\beta}_{\beta\in\calH^\perp}$, $\bfpsi_\beta:=\bfpsi_{1,\beta}$,
as the ``prototypical" ETF that arises from it.
Indeed, for any $\alpha\in\hat{\calG}$,
letting $\bfPsi_\alpha$ be the synthesis operator for $\set{\bfpsi_{\alpha,\beta}}_{\beta\in\calH^\perp}$,
we have $\bfPsi_\alpha=\bfDelta_\alpha\bfPsi$ where $\bfPsi:=\bfPsi_1$ is the synthesis operator of $\set{\bfpsi_\beta}_{\beta\in\calH^\perp}$ and where $\bfDelta_\alpha$ is the $\calD\times\calD$ unitary diagonal matrix whose $d$th diagonal entry is $\alpha(d)$:
\begin{equation*}
\bfPsi_\alpha(d,\beta)
=D^{-\frac12}\alpha(d)\beta(d)
=(\bfDelta_\alpha\bfpsi_\beta)(d)
=(\bfDelta_\alpha\bfPsi)(d,\beta).
\end{equation*}
\end{remark}

\subsection{Constructions of harmonic MUETFs}

As noted in~\cite{GodsilR09},
in the special case where $N=D$,
applying Theorem~\ref{thm.RDS gives MUETF} to an $\calH$-$\RDS(D,H,D,\Lambda)$ actually implies the existence of $H+1$ MUBs for $\bbC^\calD$: since $\abs{\bfpsi_{\alpha,\beta}(d)}=D^{-\frac12}$ for all $\alpha$, $\beta$ and $d$,
every orthonormal basis $\set{\bfpsi_{\alpha,\beta}}_{\beta\in\calH^\perp}$ is also unbiased to the standard basis.
This is especially significant since, for any prime power $Q$,
there is a classical construction of an $\RDS(Q,Q,Q,1)$~\cite{Pott95} which in turn yields $Q+1$ (the maximal number of) MUBs in $\bbC^Q$.
When $Q$ is odd, the construction is shockingly simple:
let
\begin{equation*}
\calG=\bbF_Q\times\bbF_Q,
\
\calH=\set{0}\times\bbF_Q,
\
\calD=\set{(x,x^2): x\in\bbF_Q},
\end{equation*}
where here and throughout, $\bbF_Q$ denotes the finite field of order $Q$.
Indeed, if $(x,x^2)-(y,y^2)\in\calH=\set{0}\times\bbF_Q$ then $x=y$ and so $(x,x^2)-(y,y^2)=(0,0)$.
Meanwhile, for any $(a,b)\in\calH^\rmc$ we have $a\neq0$,
and so there exists exactly one pair $(x,x^2),(y,y^2)$ such that
\begin{equation*}
(a,b)
=(x,x^2)-(y,y^2)
=(x-y,(x-y)(x+y)),
\end{equation*}
namely the pair arising from $x=\frac12(\frac ba+a)$ and $y=\frac12(\frac ba-a)$.
The construction is more complicated when $Q$ is even~\cite{Pott95}.
This is not surprising since in that case the characters of $\bbF_Q\times\bbF_Q$ are real-valued,
and there are at most $\frac Q2+1$ MUBs in $\bbR^Q$.

The proof of our main result (Theorem~\ref{thm.main result}) relies on mutually unbiased ETFs that are not MUBs,
and which arise from another classical RDS construction~\cite{Pott95}.
For any prime power $Q$ and $J\geq2$,
regard $\bbF_{Q^J}$ as a $J$-dimensional vector space over its subfield $\bbF_Q$.
Let $\tr:\bbF_{Q^J}\rightarrow\bbF_Q$, $\tr(x):=\sum_{j=0}^{J-1}x^{Q^j}$
be the field trace,
which is a nontrivial linear functional.
Let \smash{$\calG=\bbF_{Q^J}^\times$} be the (cyclic) multiplicative group of $\bbF_{Q^J}$,
let $\calH=\bbF_Q^\times$,
and consider the affine hyperplane
\begin{equation}
\label{eq.Bose RDS}
\calD=\set{x\in\bbF_{Q^J}^\times : \tr(x)=1},
\end{equation}
which has cardinality $Q^{J-1}$.
For any \smash{$y\in\bbF_{Q^J}^\times$},
\begin{equation}
\label{eq.RDS 5}
\calD\cap(y\calD)=\set{x\in\bbF_{Q^J}^\times : \tr(x)=1=\tr(y^{-1}x)}.
\end{equation}
For any $y\in\bbF_Q^\times$, $y\neq1$, the linearity of the trace gives $\tr(y^{-1}x)=y^{-1}\tr(x)$,
implying $\calD\cap(y\calD)$ is empty.
Meanwhile, for any $y\notin\bbF_Q^\times$, we can take $\set{1,y^{-1}}$ as the first two vectors in a basis $\set{z_j}_{j=1}^{J}$ for $\bbF_{Q^J}$ over $\bbF_Q$.
Since the trace is nontrivial,
the ``analysis operator" of this basis,
namely
\begin{equation*}
\bfL:\bbF_{Q^J}\rightarrow\bbF_Q^J,
\quad
\bfL(x)=(\tr(z_1x),\dotsc,\tr(z_Jx)),
\end{equation*}
has a trivial null space and is thus invertible.
Letting $\bfA$ be the $2\times J$ matrix whose rows are the first two rows of $\bfI$,
we thus have that the mapping $x\mapsto\bfA\bfL(x)=(\tr(x),\tr(y^{-1}x))$ has rank two.
This implies~\eqref{eq.RDS 5} is an affine subspace of codimension $2$
and so has cardinality $Q^{J-2}$.
Altogether,
\begin{equation*}
\#[\calD\cap(y\calD)]
=\left\{\begin{array}{cl}
Q^{J-1},&y=1,\smallskip\\
0,      &y\in\bbF_Q^\times,\ y\neq 1,\smallskip\\
Q^{J-2},&y\notin\bbF_Q^\times,
\end{array}\right.
\end{equation*}
and so $\calD$ is an $\RDS(\frac{Q^J-1}{Q-1},Q-1,Q^{J-1},Q^{J-2})$ for $\bbF_{Q^J}^\times$.

Quotienting this $\calH$-RDS by $\calH$ gives a difference set $\calD/\bbF_Q^\times$ for
$\bbF_{Q^J}^\times/\bbF_Q^\times$;
since the trace is linear over $\bbF_Q^\times$, it is
\begin{align*}
\calD/\bbF_Q^\times
=\set{x\bbF_Q^\times\in\bbF_{Q^J}^\times/\bbF_Q^\times : \tr(x)=1}\\
=\set{x\bbF_Q^\times\in\bbF_{Q^J}^\times/\bbF_Q^\times : \tr(x)\neq0},
\end{align*}
namely the complement of the well-known \textit{Singer} difference set for $\bbF_{Q^J}^\times/\bbF_Q^\times$,
defined as $\set{x\bbF_Q^\times\in\bbF_{Q^J}^\times/\bbF_Q^\times : \tr(x)=0}$.

Applying Theorem~\ref{thm.RDS gives MUETF} to this RDS yields $Q-1$ mutually unbiased $\ETF(Q^{J-1},\frac{Q^J-1}{Q-1})$.
To make this construction more explicit,
let $\alpha$ be a generator of $\bbF_{Q^J}^\times$,
and consider the isomorphism $\alpha^t\mapsto t$ from this group onto $\bbZ_{Q^J-1}$.
Under this isomorphism, $\calG$, $\calH$ and $\calD$ become
\begin{equation*}
\calG=\bbZ_{Q^J-1},\
\calH=\langle \tfrac{Q^J-1}{Q-1}\rangle,\
\calD=\set{d\in\bbZ_{Q^J-1}: \tr(\alpha^d)=1}.
\end{equation*}
This allows us to also regard
$\hat{\calG}$ as $\bbZ_{Q^J-1}$,
isomorphically identifying $s\in\bbZ_{Q^J-1}$ with the character
$t\mapsto\exp(\frac{2\pi\rmi st}{Q^J-1})$.
Under this identification, $\calH^\perp$ becomes
\begin{align*}
\calH^\perp
&=\set{s\in\bbZ_{Q^J-1}:
\exp(\tfrac{2\pi\rmi st}{Q^J-1})=1,\ \forall\, t\in\langle\tfrac{Q^J-1}{Q-1}\rangle}\\
&=\langle Q-1 \rangle.
\end{align*}
Any $s\in\bbZ_{Q^J-1}$ can be uniquely written as $s=m+(Q-1)n$ where $m$ lies in the transversal $\set{0,1,2,\dotsc,Q-2}$ of $\langle Q-1 \rangle$ and \smash{$n=0,\dotsc,\tfrac{Q^J-1}{Q-1}-1$}.
Under these identifications, the $n$th member of the $m$th mutually unbiased $\ETF(Q^{J-1},\frac{Q^J-1}{Q-1})$ for $\bbC^\calD$ produced by Theorem~\ref{thm.RDS gives MUETF} becomes
\begin{align*}
\bfpsi_{m,n}(d)
&=Q^{-\frac{J-1}2}\exp(\tfrac{2\pi\rmi[m+(Q-1)n]d}{Q^J-1})\\
&=\exp(\tfrac{2\pi\rmi md}{Q^J-1})
Q^{-\frac{J-1}2}\exp(\tfrac{2\pi\rmi(Q-1)nd}{Q^J-1}).
\end{align*}
Here, by Definition~\ref{def.MUETF},
$\abs{\ip{\bfpsi_{m,n}}{\bfpsi_{m',n'}}}^2$ has value $\frac1{Q^{J-1}}$ whenever $m\neq m'$, and otherwise has value
\begin{align*}
[\tfrac{Q^{J}-1}{Q-1}-Q^{J-1}][Q^{J-1}(\tfrac{Q^{J}-1}{Q-1}-1)]^{-1}
=\tfrac1{Q^J}.
\end{align*}
Combining these facts with the perspective of Remark~\ref{rem.harmonic MUETFF} gives:

\begin{corollary}
\label{cor.Singer MUETFF}
For any prime power $Q$ and any integer $J\geq2$,
let $\alpha$ be a generator of $\bbF_{Q^J}^\times$ and let
\begin{equation*}
\calD
=\biggset{d\in\bbZ_{Q^J-1}: \tr(\alpha^d)=\sum_{j=0}^{J-1}\alpha^{dQ^j}=1}
\subseteq\bbZ_{Q^J-1}.
\end{equation*}
Letting $\bfDelta$ be the $\calD\times\calD$ unitary diagonal matrix whose $d$th diagonal entry is $\exp(\tfrac{2\pi\rmi d}{Q^J-1})$,
\begin{equation*}
\set{\bfpsi_{m,n}}_{m=0,}^{Q-2}\,_{n=0}^{\frac{Q^J-1}{Q-1}-1}\subseteq\bbC^\calD,
\quad
\bfpsi_{m,n}:=\bfDelta^m\bfpsi_n,
\end{equation*}
is an $\MUETF(Q^{J-1},\frac{Q^J-1}{Q-1},Q-1)$ for $\bbC^\calD$ where
\begin{equation*}
\set{\bfpsi_n}_{n=0}^{\!\frac{Q^J-1}{Q-1}-1}\subseteq\bbC^\calD,
\quad
\bfpsi_n(d):=Q^{-\frac{J-1}2}\exp(\tfrac{2\pi\rmi(Q-1)nd}{Q^J-1}),
\end{equation*}
is the harmonic $\ETF(Q^{J-1},\frac{Q^J-1}{Q-1})$ for $\bbC^\calD$ arising from the complement of a Singer difference set.
In particular,
\begin{equation*}
\abs{\ip{\bfpsi_{m,n}}{\bfpsi_{m',n'}}}^2
=\left\{\begin{array}{cl}
\frac1{Q^J},&\ m=m',\, n\neq n',\smallskip\\
\frac1{Q^{J-1}},&\ m\neq m'.
\end{array}\right.
\end{equation*}
\end{corollary}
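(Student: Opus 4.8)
The plan is to recognize that this statement is a concrete, coordinatized repackaging of the general machinery already assembled, so that the proof reduces to careful bookkeeping. First I would pass from the multiplicative group $\bbF_{Q^J}^\times$ to $\bbZ_{Q^J-1}$ via the isomorphism $\alpha^t\mapsto t$ afforded by the generator $\alpha$. Under this map the affine hyperplane $\set{x\in\bbF_{Q^J}^\times:\tr(x)=1}$ becomes exactly the set $\calD$ displayed in the corollary, while the subgroup $\calH=\bbF_Q^\times$ --- being the unique subgroup of order $Q-1$ in the cyclic group $\bbF_{Q^J}^\times$ --- becomes $\langle\frac{Q^J-1}{Q-1}\rangle\leq\bbZ_{Q^J-1}$. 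I would then simply invoke the count of $\#[\calD\cap(y\calD)]$ carried out in the discussion immediately preceding the corollary, which shows that $\calD$ is an $\RDS(\frac{Q^J-1}{Q-1},Q-1,Q^{J-1},Q^{J-2})$ for $\bbF_{Q^J}^\times$, and hence, transporting along the isomorphism, for $\bbZ_{Q^J-1}$.

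With the RDS in hand, the core of the argument is a direct application of Theorem~\ref{thm.RDS gives MUETF}, with $G=Q^J-1$, $H=Q-1$, $D=Q^{J-1}$ and $N=\frac{Q^J-1}{Q-1}$. The next step is to make the indexing explicit: identifying $\hat{\calG}$ with $\bbZ_{Q^J-1}$ by sending $s$ to the character $t\mapsto\exp(\frac{2\pi\rmi st}{Q^J-1})$, one checks that the annihilator $\calH^\perp$ of $\langle\frac{Q^J-1}{Q-1}\rangle$ is precisely $\langle Q-1\rangle$, that $\set{0,1,\dotsc,Q-2}$ is a transversal of $\calH^\perp$ in $\hat{\calG}$, and that every $s$ is uniquely $s=m+(Q-1)n$ with $m$ in this transversal and $0\le n\le\frac{Q^J-1}{Q-1}-1$. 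Feeding this into Theorem~\ref{thm.RDS gives MUETF} gives $\bfpsi_{m,n}(d)=Q^{-\frac{J-1}{2}}\exp(\frac{2\pi\rmi md}{Q^J-1})\exp(\frac{2\pi\rmi(Q-1)nd}{Q^J-1})$, which factors exactly as $(\bfDelta^m\bfpsi_n)(d)$ for $\bfpsi_n$ and $\bfDelta$ as in the statement; hence $\set{\bfpsi_{m,n}}$ is an $\MUETF(Q^{J-1},\frac{Q^J-1}{Q-1},Q-1)$ for $\bbC^\calD$.

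It remains to identify the prototypical ETF and to record the two inner-product values. For the former I would appeal to Remark~\ref{rem.harmonic MUETFF}: the $m=0$ slice $\set{\bfpsi_{0,n}}_n=\set{\bfpsi_n}_n$ is the harmonic ETF attached to the difference set $\calD/\calH$ for $\calG/\calH$, and the paragraph before the corollary already identifies $\calD/\bbF_Q^\times$ with the complement of the Singer difference set $\set{x\bbF_Q^\times:\tr(x)=0}$; the relation $\bfPsi_m=\bfDelta^m\bfPsi$ is the concrete instance of the identity $\bfPsi_\alpha=\bfDelta_\alpha\bfPsi$ proved there. For the latter, Definition~\ref{def.MUETF} gives $\abs{\ip{\bfpsi_{m,n}}{\bfpsi_{m',n'}}}^2=\frac1D=\frac1{Q^{J-1}}$ when $m\neq m'$, while for $m=m'$, $n\neq n'$ the common squared modulus is the Welch-bound value $\frac{N-D}{D(N-1)}$, and substituting $N=\frac{Q^J-1}{Q-1}$ and $D=Q^{J-1}$ and simplifying yields $\frac1{Q^J}$.

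Because every substantive ingredient --- the RDS property, Theorem~\ref{thm.RDS gives MUETF}, and the prototypical-ETF identification of Remark~\ref{rem.harmonic MUETFF} --- is already in hand, I do not expect a genuine obstacle here. The only place that demands care is the passage between the field picture on $\bbF_{Q^J}^\times$ and the cyclic picture on $\bbZ_{Q^J-1}$: one must verify that the diagonal twist $\bfDelta$ really implements multiplication by the character indexed by $1$ (so that $\bfDelta^m$ corresponds to the character $m$), and that $\set{0,\dotsc,Q-2}$ is indeed a transversal of $\langle Q-1\rangle$ in $\bbZ_{Q^J-1}$, which holds because $[\bbZ_{Q^J-1}:\langle Q-1\rangle]=Q-1$.
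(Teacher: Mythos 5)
Your proposal is correct and follows essentially the same route as the paper: the corollary is obtained exactly by establishing the RDS property of the trace hyperplane, transporting to $\bbZ_{Q^J-1}$ via the generator $\alpha$, applying Theorem~\ref{thm.RDS gives MUETF} with the transversal $\set{0,\dotsc,Q-2}$ of $\calH^\perp=\langle Q-1\rangle$, and invoking Remark~\ref{rem.harmonic MUETFF} to identify the $m=0$ slice with the Singer-complement harmonic ETF. No gaps.
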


\begin{example}
\label{ex.MUETF(4,5,3)}
When $Q=4$ and $J=2$,
$X^4+X+1$ is a primitive polynomial in $\bbF_2[X]$,
meaning $\alpha=X+\langle X^4+X+1\rangle$ generates the multiplicative group of
\begin{align*}
\bbF_{16}
&=\bbF_2[X]/\langle X^4+X+1\rangle\\
&=\set{a+b\alpha+c\alpha^2+d\alpha^3: a,b,c,d\in\bbF_2,\ \alpha^4=\alpha+1}.
\end{align*}
As such, elements of $\bbF_{16}$ can be represented as the powers of the $2\times 2$ companion matrix $\bfA$ of $\alpha$, whose entries lie in $\bbF_2$:
\begin{equation*}
\bfA=\left[\begin{smallmatrix}
0&0&0&1\\
1&0&0&1\\
0&1&0&0\\
0&0&1&0
\end{smallmatrix}\right].
\end{equation*}
This representation facilitates the computation of
\begin{align*}
\calD
&=\set{d\in\bbZ_{15}: \tr(\alpha^d)=1}\\
&=\set{d\in\bbZ_{15}: \bfA^d+\bfA^{4d}=\bfI}\\
&=\set{1,2,8,4}.
\end{align*}
This $\RDS(5,3,4,1)$ yields three mutually unbiased $\ETF(4,5)$ with synthesis operators $\bfPsi$, $\bfDelta\bfPsi$, $\bfDelta^2\bfPsi$ where $\omega=\rme^{\frac{2\pi\rmi}{15}}$,
\begin{equation*}
\bfDelta=\left[\begin{array}{cccc}
\omega  &0&0&0\\
0&\omega^2&0&0\\
0&0&\omega^8&0\\
0&0&0&\omega^4
\end{array}\right],
\
\bfPsi=\frac12\left[\begin{array}{rrrrr}
1&\omega^{ 3}&\omega^{ 6}&\omega^{ 9}&\omega^{12}\\
1&\omega^{ 6}&\omega^{12}&\omega^{ 3}&\omega^{ 9}\\
1&\omega^{ 9}&\omega^{ 3}&\omega^{12}&\omega^{ 6}\\
1&\omega^{12}&\omega^{ 9}&\omega^{ 6}&\omega^{ 3}\\
\end{array}\right].
\end{equation*}
Here, $\bfPsi$ is also the synthesis operator of the harmonic ETF that arises from the difference set $\set{1,2,3,4}$ in $\bbZ_5$ that itself arises by quotienting the $\calD=\set{1,2,8,4}\subseteq\bbZ_{15}$ by $\calH=\langle 5\rangle$.
\end{example}

As seen in this example, in the special case where $J=2$,
Corollary~\ref{cor.Singer MUETFF} yields $Q-1$ mutually unbiased $\ETF(Q,Q+1)$,
namely $Q-1$ mutually unbiased $Q$-simplices.
Such MUSs recently arose~\cite{FickusS20} in a study of harmonic ETFs that are a disjoint union of regular simplices~\cite{FickusJKM18}.
In the next section, we reverse some of the analysis of~\cite{FickusJKM18,FickusS20}, using the MUETFs constructed in Corollary~\ref{cor.Singer MUETFF} to produce new ETFs.
Before doing so, note that by Theorem~\ref{thm.Gerzon}, the maximal number of complex mutually unbiased $\ETF(D,D+1)$ is at most
\begin{equation*}
\lfloor\tfrac{D^2-1}{(D+1)-1}\rfloor
=\lfloor D-\tfrac1D\rfloor
=D-1.
\end{equation*}
Altogether, we see that for any prime power $Q$, the maximal number of mutually unbiased complex $\ETF(Q,Q+1)$ is exactly $Q-1$.
Meanwhile, when $J\geq3$,
there is a gap between the number $Q-1$ of mutually unbiased complex $\ETF(Q^{J-1},\frac{Q^J-1}{Q-1})$ produced by Corollary~\ref{cor.Singer MUETFF}
and the upper bound on this number provided by Theorem~\ref{thm.Gerzon}, namely
\begin{equation*}
\lfloor\tfrac{D^2-1}{N-1}\rfloor
=\lfloor\tfrac{(Q-1)(Q^{J-1}+1)}{Q}\rfloor
=Q^{J-2}(Q-1).
\end{equation*}
This gap persists even if, following~\cite{GodsilR09},
we refine the analysis of~\eqref{eq.Gerzon} so as to account for the fact that the MUETF vectors $\bfpsi_{m,n}$ constructed by Corollary~\ref{cor.Singer MUETFF} have entries of  constant modulus $D^{-\frac12}$.
To elaborate, in this case each $\bfpsi_{m,n}$ lifts to an orthogonal projection operator $\bfpsi_{m,n}^{}\bfpsi_{m,n}^*$ which lies in real $(D^2-D+1)$-dimensional space of complex Hermitian $\calD\times\calD$ matrices with constant diagonal entries.
As such, \eqref{eq.Gerzon} refines to $M(N-1)+1\leq D^2-D+1$, namely
\begin{equation*}
M
\leq\lfloor\tfrac{D(D-1)}{N-1}\rfloor
=\Bigl\lfloor\tfrac{Q^{J-1}(Q^{J-1}-1)}{Q(\frac{Q^{J-1}-1}{Q-1})}\Bigr\rfloor
=Q^{J-2}(Q-1).
\end{equation*}

Though this might seem like an esoteric issue,
in the next section, we introduce a way of constructing new ETFs from MUETFs,
and this method very much depends on the number of mutually unbiased ETFs available.
And remarkably, the RDS literature itself warns that Corollary~\ref{cor.Singer MUETFF} can be improved upon: when $Q=4$ and $J=3$,
Corollary~\ref{cor.Singer MUETFF} yields $3$ mutually unbiased $\ETF(16,21)$ arising from an $\RDS(21,3,16,4)$ when in fact, an $\RDS(21,6,16,2)$ for $\bbZ_{126}$ exists~\cite{Pott95},
and applying Theorem~\ref{rem.harmonic MUETFF} to it yields $6$ mutually unbiased $\ETF(16,21)$.
We leave a deeper investigation of this issue for future work.

\section{New ETFs from MUETFs}
\label{sec.new ETFs}

In this section, we show how to combine the MUETFs of the previous section with certain other known ETFs to produce yet more ETFs.
Though MUETFs are an exotic ingredient, the recipe itself is simple:

\begin{theorem}
\label{thm.tensor}
If $\set{\bfphi_{n_1}}_{n_1\in\calN_1}$ is an $\ETF(D_1,N_1)$ for $\bbF^{\calD_1}$ and $\set{\bfpsi_{n_1,n_2}}_{n_1\in\calN_1,n_2\in\calN_2}$ is an $\MUETF(D_2,N_2,N_1)$ for $\bbF^{\calD_2}$,
and these parameters satisfy
\begin{equation}
\label{eq.consistency}
\tfrac{N_1-D_1}{D_1(N_1-1)}
=\tfrac{N_2-D_2}{N_2-1},
\end{equation}
then $\set{\bfphi_{n_1}\otimes\bfpsi_{n_1,n_2}}_{(n_1,n_2)\in\calN_1\times\calN_2}$ is an $\ETF(D_3,N_3)$ for $\bbF^{{\calD_1}\times{\calD_2}}$,
where $D_3=D_1D_2$, $N_3=N_1N_2$ and
\begin{align}
\label{eq.tensor 1}
N_3-D_3&=D_1N_1(N_2-D_2)+(N_1-D_1),\\[2pt]
\label{eq.tensor 2}
\tfrac{N_3-D_3}{D_3(N_3-1)}&=\tfrac{N_2-D_2}{D_2(N_2-1)},\\[2pt]
\label{eq.tensor 3}
\tfrac{D_3(N_3-D_3)}{N_3-1}&=D_1^2\tfrac{D_2(N_2-D_2)}{N_2-1}.
\end{align}
Moreover, if $\frac{D_2(N_2-D_2)}{N_2-1},\frac{(N_3-D_3)(N_3-1)}{D_3}\in\bbZ$
then $\frac{N_1}{D_1}\in\bbZ$.
\end{theorem}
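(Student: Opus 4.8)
The plan is to argue directly: assuming both $\frac{D_2(N_2-D_2)}{N_2-1}$ and $\frac{(N_3-D_3)(N_3-1)}{D_3}$ lie in $\bbZ$, I will first show $\frac{N_3-D_3}{D_1}\in\bbZ$, and then deduce $\frac{N_1}{D_1}\in\bbZ$ from \eqref{eq.tensor 1}. The crux is that the product of the two hypothesized integers is forced, via \eqref{eq.tensor 3}, to equal $\bigparen{\frac{N_3-D_3}{D_1}}^2$.

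Concretely, I would rewrite \eqref{eq.tensor 3} as $\frac{D_3(N_3-D_3)}{N_3-1}=D_1^2\,\frac{D_2(N_2-D_2)}{N_2-1}$ and multiply it by the second hypothesized integer $\frac{(N_3-D_3)(N_3-1)}{D_3}$. On the left the factors $D_3$ and $N_3-1$ cancel, leaving $(N_3-D_3)^2$; on the right one obtains $D_1^2$ times a product of two integers. Dividing by $D_1^2$ then gives
\begin{equation*}
\biggparen{\frac{N_3-D_3}{D_1}}^2
=\frac{D_2(N_2-D_2)}{N_2-1}\cdot\frac{(N_3-D_3)(N_3-1)}{D_3}\in\bbZ.
\end{equation*}
A rational number whose square is an integer is itself an integer --- writing it in lowest terms, the denominator of its square is again in lowest terms, hence equals $1$ --- so $\frac{N_3-D_3}{D_1}\in\bbZ$, i.e.\ $D_1\mid N_3-D_3$.

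Finally, \eqref{eq.tensor 1} reads $N_3-D_3=D_1N_1(N_2-D_2)+(N_1-D_1)$, so dividing by $D_1$ yields $\frac{N_3-D_3}{D_1}=N_1(N_2-D_2)+\frac{N_1}{D_1}-1$. Since $\frac{N_3-D_3}{D_1}$, $N_1(N_2-D_2)$ and $1$ are integers, so is $\frac{N_1}{D_1}$, as claimed.

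The argument is short, and the only point I expect to need a sentence of justification is the ``rational with integer square is an integer'' step; everything else is substitution into the already-established identities \eqref{eq.tensor 1} and \eqref{eq.tensor 3}. In particular, \eqref{eq.tensor 2} and the consistency hypothesis \eqref{eq.consistency} are not used in this deduction.
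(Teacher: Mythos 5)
Your argument for the closing divisibility claim is correct, and it is in substance the paper's own: both compute the product of the two hypothesized integers, identify it with $\bigparen{\tfrac{N_3-D_3}{D_1}}^2$, and then read off $\tfrac{N_1}{D_1}\in\bbZ$ from \eqref{eq.tensor 1}. The only difference is cosmetic --- you substitute via \eqref{eq.tensor 3} where the paper substitutes via \eqref{eq.tensor 2}, but \eqref{eq.tensor 3} is \eqref{eq.tensor 2} multiplied by $D_3^2$, so the computation is identical. Your explicit justification that a rational number with integer square is itself an integer is a point the paper leaves implicit, and is worth keeping.

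However, as a proof of the stated theorem the proposal has a genuine gap: it establishes only the final sentence, treating \eqref{eq.tensor 1} and \eqref{eq.tensor 3} as ``already-established'' and never showing that $\set{\bfphi_{n_1}\otimes\bfpsi_{n_1,n_2}}_{(n_1,n_2)\in\calN_1\times\calN_2}$ is an ETF at all, which is the main content. You still need: (i) equiangularity --- multiplying $\abs{\ip{\bfphi_{n_1}}{\bfphi_{n_1'}}}^2$ by $\abs{\ip{\bfpsi_{n_1,n_2}}{\bfpsi_{n_1',n_2'}}}^2$ gives the off-diagonal value $\tfrac{N_2-D_2}{D_2(N_2-1)}$ when $n_1=n_1'$, $n_2\neq n_2'$, and $\tfrac{N_1-D_1}{D_2D_1(N_1-1)}$ when $n_1\neq n_1'$, and \eqref{eq.consistency} is precisely the condition that these two values agree; (ii) tightness --- since each $\set{\bfpsi_{n_1,n_2}}_{n_2\in\calN_2}$ is a UNTF for $\bbF^{\calD_2}$, the frame operator of the tensored system is $\sum_{n_1\in\calN_1}\bfphi_{n_1}^{}\bfphi_{n_1}^*\otimes\tfrac{N_2}{D_2}\bfI_{\calD_2}=\tfrac{N_1N_2}{D_1D_2}\bfI$; and (iii) the identities \eqref{eq.tensor 1}--\eqref{eq.tensor 3} themselves, which follow by short algebra from \eqref{eq.consistency} (for instance, \eqref{eq.tensor 2} records that the common off-diagonal value is the Welch bound for $(D_3,N_3)$, and \eqref{eq.tensor 1} and \eqref{eq.tensor 3} come from multiplying \eqref{eq.tensor 2} by $D_3(N_3-1)$ and by $D_3^2$, respectively). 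With these supplied, your closing deduction completes the proof exactly as in the paper.
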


\begin{proof}
Since $\set{\bfphi_{n_1}}_{n_1\in\calN_1}$ is an $\ETF(D_1,N_1)$,
\begin{equation*}
\abs{\ip{\bfphi_{n_1}}{\bfphi_{n_1'}}}^2
=\left\{\begin{array}{cl}
1,&n_1=n_1',\smallskip\\
\tfrac{N_1-D_1}{D_1(N_1-1)},& n_1\neq n_1'.
\end{array}\right.
\end{equation*}
Also $\set{\bfpsi_{n_1,n_2}}_{n_1\in\calN_1,n_2\in\calN_2}$ is an $\MUETF(D_2,N_2,N_1)$, and so Definition~\ref{def.MUETF} gives
\begin{equation*}
\abs{\ip{\bfpsi_{n_1,n_2}}{\bfpsi_{n_1',n_2'}}}^2
=\left\{\begin{array}{cl}
1,&(n_1,n_2)=(n_1',n_2'),\smallskip\\
\tfrac{N_2-D_2}{D_2(N_2-1)},&n_1=n_1', n_2\neq n_2',\smallskip\\
\frac1{D_2},&n_1\neq n_1'.\end{array}\right.
\end{equation*}
For any $(n_1,n_2),(n_1',n_2')\in\calN_1\times\calN_2$,
multiplying these expressions gives
\begin{multline}
\abs{\ip{\bfphi_{n_1}\otimes\bfpsi_{n_1,n_2}}{\bfphi_{n_1'}\otimes\bfpsi_{n_1',n_2'}}}^2\\
\label{eq.pf of tensor 1}
=\left\{\begin{array}{cl}
1,&(n_1,n_2)=(n_1',n_2'),\smallskip\\
\tfrac{N_2-D_2}{D_2(N_2-1)},&n_1=n_1', n_2\neq n_2',\smallskip\\
\tfrac{N_1-D_1}{D_2D_1(N_1-1)},&n_1\neq n_1'.\end{array}\right.
\end{multline}
In particular, \eqref{eq.consistency} equates to $\set{\bfphi_{n_1}\otimes\bfpsi_{n_1,n_2}}_{(n_1,n_2)\in\calN_1\times\calN_2}$ being equiangular.
To prove that this sequence of vectors is a tight frame for $\bbF^{\calD_1\times\calD_2}$
note that for any $n_1\in\calN_1$, the fact that $\set{\bfpsi_{n_1,n_2}}_{n_2\in\calN_2}$ achieves the Welch bound for $N_2$ vectors in $\bbF^{\calD_2}$ implies it is necessarily a UNTF for $\bbF^{\calD_2}$,
i.e., that its synthesis operator $\bfPsi_{n_1}$ satisfies
$\bfPsi_{n_1}^{}\bfPsi_{n_1}^*=\frac{N_2}{D_2}\bfI_{\calD_2}$.
Similarly, the synthesis operator $\bfPhi$ of $\set{\bfphi_{n_1}}_{n_1\in\calN_1}$ satisfies $\bfPhi\bfPhi^*=\frac{N_1}{D_1}\bfI_{\calD_1}$.
Thus, the frame operator of $\set{\bfphi_{n_1}\otimes\bfpsi_{n_1,n_2}}_{(n_1,n_2)\in\calN_1\times\calN_2}$ is
\begin{multline*}
\sum_{n_1\in\calN_1}\bfphi_{n_1}^{}\bfphi_{n_1}^*\otimes
\biggparen{\,\sum_{n_2\in\calN_2}\bfpsi_{n_1,n_2}^{}\bfpsi_{n_1,n_2}^*}\\
=\sum_{n_1\in\calN_1}\bfphi_{n_1}^{}\bfphi_{n_1}^*\otimes(\tfrac{N_2}{D_2}\bfI_{\calD_2})
=\tfrac{N_1N_2}{D_1D_2}\bfI_{{\calD_1}\times{\calD_2}}.
\end{multline*}
Thus,
$\set{\bfphi_{n_1}\otimes\bfpsi_{n_1,n_2}}_{(n_1,n_2)\in\calN_1\times\calN_2}$ is an $\ETF(D_3,N_3)$ for $\bbF^{\calD_1\times\calD_2}$ where $D_3=D_1D_2$ and $N_3=N_1N_2$.
By~\eqref{eq.pf of tensor 1},
the Welch bound of this $\ETF(D_3,N_3)$ equals that of our mutually unbiased $\ETF(D_2,N_2)$, giving~\eqref{eq.tensor 2}.
(Alternatively, \eqref{eq.tensor 2} follows directly from~\eqref{eq.consistency}.)
Next, multiplying~\eqref{eq.tensor 2} by $D_3(N_3-1)=D_1D_2(N_1N_2-1)$ and using~\eqref{eq.consistency} gives~\eqref{eq.tensor 1}:
\begin{align*}
N_3-D_3
&=\tfrac{D_1(N_1N_2-1)(N_2-D_2)}{N_2-1}\\
&=D_1N_1(N_2-D_2)+\tfrac{D_1(N_1-1)(N_2-D_2)}{N_2-1}\\
&=D_1N_1(N_2-D_2)+(N_1-D_1).
\end{align*}
Meanwhile, multiplying~\eqref{eq.tensor 2} by $D_3^2=D_1^2D_2^2$ immediately gives \eqref{eq.tensor 3}.
For the final conclusion, note that if
\begin{equation*}
\tfrac{D_2(N_2-D_2)}{N_2-1},\ \tfrac{(N_3-D_3)(N_3-1)}{D_3}
\end{equation*}
are integers then their product is as well;
by~\eqref{eq.tensor 2} and~\eqref{eq.tensor 1}, this product is
\begin{align*}
\tfrac{D_2(N_2-D_2)}{N_2-1}\tfrac{(N_3-D_3)(N_3-1)}{D_3}
&=\tfrac{D_2^2(N_3-D_3)}{D_3(N_3-1)}\tfrac{(N_3-D_3)(N_3-1)}{D_3}\\
&=(\tfrac{N_3-D_3}{D_1})^2\\
&=[N_1(N_2-D_2)+\tfrac{N_1}{D_1}-1]^2,
\end{align*}
and so $D_1$ necessarily divides $N_1$.
\end{proof}

\begin{example}
Since $(D_1,N_1)=(2,3)$ and $(D_2,N_2)=(4,5)$ satisfy~\eqref{eq.consistency},
i.e.,\ $\frac{3-2}{2(3-1)}=\frac14=\frac{5-4}{5-1}$,
we can apply Theorem~\ref{thm.tensor} to an $\ETF(2,3)$ and the $\MUETF(4,5,3)$ of Example~\ref{ex.MUETF(4,5,3)} to produce an $\ETF(8,15)$.
In particular, we take \smash{$\omega=\exp(\frac{2\pi\rmi}{15})$} as before,
and let $\set{\bfphi_n}_{n=0}^2$ be the $\ETF(2,3)$ with
\begin{equation*}
\bfPhi
=\left[\begin{array}{ccc}
\bfphi_0&\bfphi_1&\bfphi_2
\end{array}\right]
=\frac1{\sqrt{2}}\left[\begin{array}{ccc}
1&\omega^{ 5}&\omega^{10}\\
1&\omega^{10}&\omega^{ 5}
\end{array}\right].
\end{equation*}
Taking $\bfDelta$ and $\bfPsi$ as in Example~\ref{ex.MUETF(4,5,3)},
Theorem~\ref{thm.tensor} gives that
\begin{equation*}
\left[\begin{array}{ccc}
(\bfphi_0\otimes\bfPsi)
&(\bfphi_1\otimes\bfDelta\bfPsi)
&(\bfphi_2\otimes\bfDelta^2\bfPsi)
\end{array}\right]
\end{equation*}
is the synthesis operator of an $\ETF(8,15)$.

In fact, it turns out that perfectly shuffling the columns of this matrix---converting three collections of five vectors into five collections of three vectors---yields a harmonic ETF arising from a difference set for $\bbZ_{15}$ that is itself the sum of a difference set $\set{5,10}$ for the subgroup $\calH=\set{0,5,10}$ of $\bbZ_{15}$ and the $\calH$-RDS $\set{1,2,8,4}$ for $\bbZ_{15}$ from Example~\ref{ex.MUETF(4,5,3)},
namely
\begin{equation}
\label{eq.GMW 8x15}
\set{6,11,7,12,13,3,9,14}
=\set{5,10}+\set{1,2,8,4}.
\end{equation}
\end{example}

A little later on, we explain that while not every ETF produced by Theorem~\ref{thm.tensor} is harmonic,
this does occur infinitely often.
In such special cases, it turns out that the construction of Theorem~\ref{thm.tensor} is a reversal of some of the ideas from~\cite{FickusJKM18,FickusS20}.
To elaborate, \cite{FickusJKM18} considers ETFs that contain subsequences which are regular simplices for their span.
There, it was shown that certain Singer-complement-harmonic ETFs partition into such subsequences,
and moreover that their spans form a type of optimal packing of subspaces known as an \textit{equi-chordal tight fusion frame} (ECTFF).
(The techniques of~\cite{FickusJKM18} show, for example, that the harmonic $\ETF(8,15)$ arising from~\eqref{eq.GMW 8x15} partitions into three $\ETF(4,5)$, yielding an ECTFF that consists of three $4$-dimensional subspaces of $\bbC^8$.)
This analysis was refined in~\cite{FickusS20},
showing that these subspaces are actually \textit{equi-isoclinic} when the underlying difference set factors in a manner similar to~\eqref{eq.GMW 8x15}.

Theorem~\ref{thm.tensor} reverses this idea, concatenating $N_1$ subsequences, each of which is an $\ETF(D_2,N_2)$ for its span, to form an $\ETF(D_1D_2,N_1N_2)$.
Indeed, for any $n_1\in\calN_1$,
$\bfphi_{n_1}\otimes\bfPsi_{n_1}=(\bfphi_{n_1}\otimes\bfI)\bfPsi_{n_1}$ is the synthesis operator of \smash{$\set{\bfphi_{n_1}\otimes\bfpsi_{n_1,n_2}}_{n_2\in\calN_2}$},
meaning this subsequence is the embedding of the ETF \smash{$\set{\bfpsi_{n_1,n_2}}_{n_2\in\calN_2}$} into a $D_2$-dimensional subspace of $\bbF^{\calD_1\times\calD_2}$ via the isometry $\bfphi_{n_1}\otimes\bfI$.
In the special case where $N_2=D_2+1$,
the ETFs constructed by Theorem~\ref{thm.tensor} partition into a union of $N_1$ regular $D_2$-simplices.

We now apply Theorem~\ref{thm.tensor} with the only MUETFs we know of that are not MUBs,
namely those described in Corollary~\ref{cor.Singer MUETFF}:

\begin{theorem}
\label{thm.main result}
If an $\ETF(D,N)$ exists where $D<N<2D$ and $Q=\frac{D(N-1)}{N-D}$ is a power of a prime,
then for every positive integer $J$, there exists an $\ETF(D^{(J)},N^{(J)})$ where
\begin{align}
\label{eq.main 1}
D^{(J)}&=DQ^{J-1},\\
\label{eq.main 2}
N^{(J)}&=N(\tfrac{Q^J-1}{Q-1}),\\
\label{eq.main 3}
N^{(J)}-D^{(J)}&=DN(\tfrac{Q^{J-1}-1}{Q-1})+(N-D).
\end{align}
Here, for any $J\geq 2$,
\begin{align}
\label{eq.main 4}
\tfrac{D^{(J)}(N^{(J)}-1)}{N^{(J)}-D^{(J)}}&=Q^J,\\[2pt]
\label{eq.main 5}
\tfrac{(N^{(J)}-D^{(J)})(N^{(J)}-1)}{D^{(J)}}&=D^2Q^{J-2},\\[2pt]
\label{eq.main 6}
\tfrac{(N^{(J)}-D^{(J)})(N^{(J)}-1)}{D^{(J)}}&\notin\bbZ,\\[2pt]
\label{eq.main 7}
D^{(J)}+1&<N^{(J)}<2D^{(J)},
\end{align}
and so no $\ETF(D^{(J)},N^{(J)})$ can be real-valued.
\end{theorem}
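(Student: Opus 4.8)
The statement is a direct application of Theorem~\ref{thm.tensor} with the MUETFs of Corollary~\ref{cor.Singer MUETFF}, followed by a few arithmetic verifications. First I would set $(D_1,N_1)=(D,N)$ for the hypothesized $\ETF(D,N)$, and take $(D_2,N_2)=(Q^{J-1},\tfrac{Q^J-1}{Q-1})$ from Corollary~\ref{cor.Singer MUETFF}, which furnishes an $\MUETF(Q^{J-1},\tfrac{Q^J-1}{Q-1},Q-1)$ for $\bbC^{\calD}$. To invoke Theorem~\ref{thm.tensor} I must check two things: that $N_1=N\leq Q-1$ (so that there are enough mutually unbiased ETFs in the collection — actually we need exactly $N_1$ of them, so $N\le Q-1$), and that the consistency condition~\eqref{eq.consistency} holds. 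For the latter, $\tfrac{N_2-D_2}{N_2-1}=\tfrac{1}{Q}$ by the computation already done in deriving Corollary~\ref{cor.Singer MUETFF}, while $\tfrac{N_1-D_1}{D_1(N_1-1)}=\tfrac{N-D}{D(N-1)}=\tfrac1Q$ by the hypothesis $Q=\tfrac{D(N-1)}{N-D}$; so~\eqref{eq.consistency} is automatic. For $N\le Q-1$: since $D<N<2D$ we have $N-D<D$, hence $Q=\tfrac{D(N-1)}{N-D}>N-1$, i.e.\ $Q\ge N$; and $Q=N$ would force $N-D=\tfrac{D(N-1)}{N}$, i.e.\ $N(N-D)=D(N-1)$, i.e.\ $N^2-2DN+D=0$, which has no integer solution with $D<N<2D$ (a short check: it would give $D=\tfrac{N^2}{2N-1}$, not an integer for $N\ge2$), so in fact $N\le Q-1$. (Alternatively, when $J\ge2$ the collection in Corollary~\ref{cor.Singer MUETFF} has $Q-1$ members and $N\le Q-1$ suffices; for $J=1$ the ETF $\ETF(D^{(1)},N^{(1)})=\ETF(D,N)$ is just the given one, so there is nothing to prove.)

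Granting the hypotheses of Theorem~\ref{thm.tensor}, it yields an $\ETF(D_1D_2,N_1N_2)$, i.e.\ an $\ETF(DQ^{J-1},N\cdot\tfrac{Q^J-1}{Q-1})$, establishing~\eqref{eq.main 1} and~\eqref{eq.main 2}. Equation~\eqref{eq.main 3} is~\eqref{eq.tensor 1} with $N_2-D_2=\tfrac{Q^J-1}{Q-1}-Q^{J-1}=\tfrac{Q^{J-1}-1}{Q-1}$ substituted. For $J\ge2$: equation~\eqref{eq.main 4} is the assertion that $\tfrac{D^{(J)}(N^{(J)}-1)}{N^{(J)}-D^{(J)}}=Q^J$, which follows from~\eqref{eq.tensor 2} — the Welch parameter $\tfrac{N_3-D_3}{D_3(N_3-1)}$ equals $\tfrac{N_2-D_2}{D_2(N_2-1)}=\tfrac{1}{Q^J}$ as computed in deriving Corollary~\ref{cor.Singer MUETFF} — so its reciprocal is $Q^J$. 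Equation~\eqref{eq.main 5} is~\eqref{eq.tensor 3} rewritten: $\tfrac{D_3(N_3-D_3)}{N_3-1}=D_1^2\cdot\tfrac{D_2(N_2-D_2)}{N_2-1}=D^2\cdot\tfrac{Q^{J-1}(Q^{J-1}-1)/(Q-1)}{(Q^J-1)/(Q-1)-1}=D^2\cdot Q^{J-2}$ after simplification, and taking reciprocals of both the $\ETF(D^{(J)},N^{(J)})$ and rearranging gives the stated form.

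For~\eqref{eq.main 6}, I would argue by contradiction: if $\tfrac{(N^{(J)}-D^{(J)})(N^{(J)}-1)}{D^{(J)}}\in\bbZ$, then since also $\tfrac{D_2(N_2-D_2)}{N_2-1}=\tfrac1{Q}\cdot\tfrac{Q^{J-1}-1}{(Q^J-1)/(Q-1)}\cdot\ldots$ — actually the cleanest route is the ``Moreover'' clause of Theorem~\ref{thm.tensor}: we have $\tfrac{D_2(N_2-D_2)}{N_2-1}=\tfrac{1}{Q}$ — wait, that is not an integer in general, so I should instead check whether $\tfrac{D_2(N_2-D_2)}{N_2-1}\in\bbZ$. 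Here $\tfrac{D_2(N_2-D_2)}{N_2-1}=\tfrac{Q^{J-1}\cdot\frac{Q^{J-1}-1}{Q-1}}{\frac{Q^J-1}{Q-1}-1}=\tfrac{Q^{J-1}(Q^{J-1}-1)}{Q^J-Q}=\tfrac{Q^{J-2}(Q^{J-1}-1)}{Q^{J-1}-1}=Q^{J-2}\in\bbZ$ for $J\ge2$. So if additionally $\tfrac{(N^{(J)}-D^{(J)})(N^{(J)}-1)}{D^{(J)}}\in\bbZ$, the ``Moreover'' clause forces $\tfrac{N_1}{D_1}=\tfrac{N}{D}\in\bbZ$; but $D<N<2D$ means $1<\tfrac ND<2$, which is impossible for an integer. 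This contradiction proves~\eqref{eq.main 6}. Finally,~\eqref{eq.main 7}: the lower bound $N^{(J)}>D^{(J)}+1$ and upper bound $N^{(J)}<2D^{(J)}$ both reduce, via~\eqref{eq.main 1}--\eqref{eq.main 3}, to elementary inequalities in $D,N,Q,J$ using $D<N<2D$ and $Q\ge N\ge D+1\ge3$; e.g.\ $2D^{(J)}-N^{(J)}=2DQ^{J-1}-N\tfrac{Q^J-1}{Q-1}>0$ follows since $2D>N$ and $Q^{J-1}>\tfrac{Q^J-1}{Q(Q-1)}\cdot\ldots$ — I would just expand and bound term by term. Since~\eqref{eq.main 5} gives $\tfrac{(N^{(J)}-D^{(J)})(N^{(J)}-1)}{D^{(J)}}=D^2Q^{J-2}$, the combination of~\eqref{eq.main 6} (this quantity is not an integer) with the known fact that for a real $\ETF(D,N)$ with $1<D<N-1$ the quantity $\tfrac{(N-D)(N-1)}{D}$ must be an integer (it equals a count of paths in the associated strongly regular graph — or, more elementarily, it is $N-D$ times the reciprocal Welch parameter's numerator over a valid integer) shows no $\ETF(D^{(J)},N^{(J)})$ can be real; alternatively I can cite the integrality constraints on real ETFs directly.

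\textbf{The main obstacle.} The genuinely delicate point is verifying the hypothesis $N\le Q-1$ needed to apply Theorem~\ref{thm.tensor} with the $Q-1$ mutually unbiased ETFs of Corollary~\ref{cor.Singer MUETFF}: one must rule out $Q=N$ (and check $J=1$ separately as trivial). Everything else is bookkeeping: substituting the explicit parameters $(D_2,N_2)=(Q^{J-1},\tfrac{Q^J-1}{Q-1})$ into~\eqref{eq.tensor 1}--\eqref{eq.tensor 3} and simplifying, then invoking the ``Moreover'' clause of Theorem~\ref{thm.tensor} together with $D<N<2D$ to kill the integrality in~\eqref{eq.main 6}. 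I would organize the write-up as: (1) dispose of $J=1$; (2) verify $N\le Q-1$ and~\eqref{eq.consistency}; (3) apply Theorem~\ref{thm.tensor} and read off~\eqref{eq.main 1}--\eqref{eq.main 5}; (4) prove~\eqref{eq.main 6} via the ``Moreover'' clause and $N/D\notin\bbZ$; (5) prove~\eqref{eq.main 7} by elementary estimates; (6) conclude non-reality from~\eqref{eq.main 6} and the integrality necessary condition for real ETFs.
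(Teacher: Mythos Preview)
Your plan is correct and mirrors the paper's proof essentially step for step: dispose of $J=1$, verify $N\le Q-1$ and~\eqref{eq.consistency}, apply Theorem~\ref{thm.tensor} to read off \eqref{eq.main 1}--\eqref{eq.main 5}, then use the ``Moreover'' clause with $1<\tfrac ND<2$ for~\eqref{eq.main 6}, and finish with~\eqref{eq.main 7} and the real-ETF integrality condition. The only places the paper is tighter are the bound $N\le Q-1$, which it gets in one line from the identity $Q-1=\tfrac{N(D-1)}{N-D}\ge N$ (since $N-D\le D-1$), and the upper half of~\eqref{eq.main 7}, which it proves via $\tfrac{N^{(J)}}{D^{(J)}}=\tfrac{N}{D}\cdot\tfrac{Q^J-1}{Q^{J-1}(Q-1)}<\tfrac{N}{D}\cdot\tfrac{Q}{Q-1}=\tfrac{N-1}{D-1}\le 2$.
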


\begin{proof}
When $J=1$, \eqref{eq.main 1}, \eqref{eq.main 2} and~\eqref{eq.main 3} become $D^{(1)}=D$, $N^{(1)}=N$ and $N^{(1)}-D^{(1)}=N-D$, respectively,
and the given $\ETF(D,N)$ is an $\ETF(D^{(1)},N^{(1)})$.
As such, assume $J\geq 2$.
Here, Corollary~\ref{cor.Singer MUETFF} provides an $\MUETF(Q^{J-1},\frac{Q^J-1}{Q-1},Q-1)$.
Moreover, since $N$ and $D$ are integers,
having $N<2D$ implies $N-D\leq D-1$ and so
\begin{equation*}
Q-1
=\tfrac{D(N-1)}{N-D}-1
=\tfrac{N(D-1)}{N-D}
\geq N.
\end{equation*}
Thus,
we can take just $N$ of these mutually unbiased ETFs to form an
\smash{$\MUETF(Q^{J-1},\frac{Q^J-1}{Q-1},N)$}.
To apply Theorem~\ref{thm.tensor} with this MUETF,
note that $(N_2,D_2)=(Q^{J-1},\frac{Q^J-1}{Q-1})$ satisfies
\begin{align}
\label{eq.pf of main 0}
N_2-D_2
&=\tfrac{Q^J-1}{Q-1}-Q^{J-1}
=\tfrac{Q^{J-1}-1}{Q-1},\\
\label{eq.pf of main 1}
N_2-1
&=\tfrac{Q^J-1}{Q-1}-1
=Q(\tfrac{Q^{J-1}-1}{Q-1}).
\end{align}
When combined with our assumption that $Q=\frac{D(N-1)}{N-D}$,
this implies that~\eqref{eq.consistency} is satisfied when $(N_1,D_1)=(D,N)$:
\begin{equation*}
\tfrac{N_2-D_2}{N_2-1}
=\tfrac1Q
=\tfrac{N-D}{D(N-1)}
=\tfrac{N_1-D_1}{D_1(N_1-1)}.
\end{equation*}
As such, Theorem~\ref{thm.tensor} yields an $\ETF(D^{(J)},N^{(J)})$ where
\begin{align*}
D^{(J)}&=D_3=D_1D_2=DQ^{J-1},\\
N^{(J)}&=N_3=N_1N_2=N(\tfrac{Q^J-1}{Q-1}),
\end{align*}
as claimed in~\eqref{eq.main 1} and~\eqref{eq.main 2}, respectively.
Moreover, combining~\eqref{eq.pf of main 0} with~\eqref{eq.tensor 1} immediately yields~\eqref{eq.main 3}.
Next, \eqref{eq.main 4} and~\eqref{eq.main 5} follow from combining \eqref{eq.tensor 2} and \eqref{eq.tensor 3} with \eqref{eq.pf of main 0}, \eqref{eq.pf of main 1} and the fact that $D_2=Q^{J-1}$.
Continuing, since $J\geq 2$,
\eqref{eq.main 3} gives
$N^{(J)}-D^{(J)}>N-D\geq1$,
namely one half of~\eqref{eq.main 7}.
For the remaining half,
note that since $Q=\frac{D(N-1)}{N-D}$ and $N<2D$,
\begin{equation*}
\tfrac{N^{(J)}}{D^{(J)}}
=\tfrac{N}{D}\tfrac{Q^J-1}{Q^{J-1}(Q-1)}
<\tfrac{N}{D}\tfrac{Q}{Q-1}
=\tfrac{N-1}{D-1}
\leq\tfrac{(2D-1)-1}{D-1}
=2.
\end{equation*}
Moreover, since $J\geq2$,
\begin{equation*}
\tfrac{D_2(N_2-D_2)}{N_2-1}=\tfrac{Q^{J-1}}{Q}=Q^{J-2}\in\bbZ,
\end{equation*}
while $1<\frac{N}{D}<2$ and so $\frac{N_1}{D_1}=\frac{N}{D}\notin\bbZ$.
As such, the final statement of Theorem~\ref{thm.tensor} gives~\eqref{eq.main 6}.
In particular, when $J\geq 2$,
\eqref{eq.main 6} and~\eqref{eq.main 7} imply that $(D^{(J)},N^{(J)})$ violates a well-known necessary condition on the existence of real ETFs~\cite{SustikTDH07}.
(We caution that~\eqref{eq.main 6} and~\eqref{eq.main 7} are not necessary when $J=1$.
In fact, some of the most fruitful applications of these ideas are scenarios where
$(D^{(1)},N^{(1)})=(D,N)$ either satisfies $\frac{(N-D)(N-1)}{D}\in\bbZ$ or $N=D+1$.)
\end{proof}

By our earlier remarks,
any ETF produced by Theorem~\ref{thm.main result} partitions into $N$ subsequences, each consisting of $\tfrac{Q^J-1}{Q-1}$ vectors that form an ETF for their $Q^{J-1}$-dimensional span.
In particular, taking $J=2$ yields ETFs that are disjoint unions of regular $Q$-simplices.
We also note that in light of~\eqref{eq.main 4} and~\eqref{eq.main 7},
Theorem~\ref{thm.main result} can be applied to any $\ETF(D^{(J)},N^{(J)})$ that it itself produces;
the interested reader can verify that doing so only yields a proper subset of those that arise by applying it to the original $\ETF(D,N)$.

By~\eqref{eq.main 5}, any ETF produced by Theorem~\ref{thm.main result} with $J\geq2$ satisfies the most basic necessary condition on the existence of harmonic ETFs,
namely that the \textit{index}
\begin{equation*}
D-\Lambda
=D-\tfrac{D(D-1)}{N-1}
=\tfrac{D(N-D)}{N-1}
\end{equation*}
of its underlying difference set is an integer.
At the same time, the fact that such an ETF satisfies~\eqref{eq.main 6} makes it unusual.
In fact, most known $\ETF(D,N)$ have the property that both $\frac{D(N-1)}{N-D}$ and $\frac{(N-D)(N-1)}{D}$ are integers,
including all real ETFs, SIC-POVMs, positive and negative ETFs, ETFs of redundancy two,
and all harmonic ETFs that arise from either Hadamard, McFarland, Spence, Davis-Jedwab and Chen difference sets~\cite{SustikTDH07,FickusJ19}.
Prior to Theorem~\ref{thm.main result},
the only known exceptions seemed to be certain harmonic ETFs (arising for example from  Singer, Paley, cyclotomic, Hall and twin-prime-power difference sets) and $\ETF(D,N)$ with $N=2D\pm1$~\cite{Renes07,Strohmer08}.
To be clear, Theorem~\ref{thm.main result} recovers some of these previously known unusual ETFs:
the next subsection is devoted to the special relationship between Theorem~\ref{thm.main result} and Singer difference sets;
moreover, applying Theorem~\ref{thm.main result} to any $\ETF(D,N)$ where $N=2D-1$ and $D$ is an even prime power yields another ETF of this same type.
However, in other cases, Theorem~\ref{thm.main result} yields ETFs with new parameters:

\begin{example}
\label{ex.ETF(6,9)}
The Naimark complement of a SIC-POVM for $\bbC^3$ is an $\ETF(6,9)$.
Since $Q=\frac{6(9-1)}{9-6}=16$ is a prime power,
Theorem~\ref{thm.main result} can be applied to it.
We walk through its proof in the special case where $J=2$.
Here, Corollary~\ref{cor.Singer MUETFF} provides $15$ mutually unbiased $\ETF(16,17)$ (regular $16$-simplices).
Since $\frac{9-6}{6(9-1)}=\frac1{16}=\frac{17-16}{17-1}$, condition~\eqref{eq.consistency} of Theorem~\ref{thm.tensor} is met,
and so taking tensor products of the members of this $\ETF(6,9)$ with the members of any $9$ of these $15$ mutually unbiased $\ETF(16,17)$ yields an $\ETF(D,N)$ with $(D,N)=(6(16),9(17))=(96,153)$.
This ETF is new~\cite{FickusM16}.
ETFs with these parameters cannot be real, SIC-POVMs, positive or negative since
$\frac{(N-D)(N-1)}{D}=\frac{361}{4}=(\frac{19}{2})^2$ is not an integer.
Moreover, no difference set of cardinality $96$ (or equivalently, $57$) exists in either $\bbZ_{3}\times\bbZ_3\times\bbZ_{17}$ or $\bbZ_9\times\bbZ_{17}$~\cite{Gordon19},
despite the fact that $\frac{D(N-D)}{N-1}=36$ is an integer.
\end{example}

\begin{example}
For another ``small" example, an $\ETF(10,16)$ exists~\cite{FickusM16} and $Q=\frac{10(16-1)}{16-10}=25$ is a prime power.
As such, when $J=2$ for example, Theorem~\ref{thm.main result} yields an $\ETF(D,N)$ with $(D,N)=(10(25),16(26))=(250,416)$.
This ETF is also new~\cite{FickusM16},
and $\frac{(N-D)(N-1)}{D}=(\frac{83}{5})^2$ is not an integer.
The existence of a difference set of cardinality 250 (or equivalently, 166) in a group of order $416$ is an open problem~\cite{Gordon19}.
\end{example}

In the next two subsections,
we evaluate the novelty of the ETFs produced by Theorem~\ref{thm.main result} in general.
It turns out that an infinite number of them are new,
and that another infinite number of them are not new.

\subsection{Recovering known ETFs with Theorem~\ref{thm.main result}}

Theorem~\ref{thm.main result} applies to any harmonic ETF  that arises from the complement of a Singer difference set, but doing so just yields another ETF of this same type.
To elaborate, when $K\geq 2$ and $(D,N)=(Q^{K-1},\frac{Q^K-1}{Q-1})$ for some prime power $Q$, we have
\begin{equation*}
1<\tfrac{N}{D}=\tfrac{Q^K-1}{Q^K-Q^{K-1}}<2,\quad
\tfrac{D(N-1)}{N-D}=Q^K.
\end{equation*}
Theorem~\ref{thm.main result} thus provides an $\ETF(D^{(J)},N^{(J)})$ with
\begin{align*}
D^{(J)}
&=D(Q^K)^{J-1}
=Q^{K-1}Q^{K(J-1)}
=Q^{JK-1},\\
N^{(J)}
&=N\tfrac{(Q^K)^J-1}{Q^K-1}
=\tfrac{Q^K-1}{Q-1}\tfrac{Q^{JK}-1}{Q^K-1}
=\tfrac{Q^{JK}-1}{Q-1}.
\end{align*}

As we now explain, this relates to the fact that Theorem~\ref{thm.main result} can be regarded as a generalization of a classical factorization of the complement of a Singer difference set due to Gordon, Mills and Welch~\cite{GordonMW62}.
Denoting the field trace from $\bbF_{Q^{JK}}$ to $\bbF_{Q^K}$ as ``$\tr_{JK:K}$,"
the freshman's dream gives
\begin{equation*}
\tr_{JK:1}(x)
=\tr_{K:1}(\tr_{JK:K}(x)),\quad\forall x\in\bbF_{Q^{JK}}.
\end{equation*}
We claim that $\calE_1=\calE_2\calE_3$ where
\begin{align*}
\calE_1&=\set{x_1\in\bbF_{Q^{JK}}^\times: \tr_{JK:1}(x_1)=1},\\
\calE_2&=\set{x_2\in\bbF_{Q^K}^{\times}: \tr_{K:1}(x_2)=1},\\
\calE_3&=\set{x_3\in\bbF_{Q^{JK}}^\times: \tr_{JK:K}(x_3)=1}.
\end{align*}
Indeed, for any $x_2\in\calE_2$ and $x_3\in\calE_3$,
\begin{equation*}
\tr_{JK:1}(x_2x_3)
=\tr_{K:1}(x_2\tr_{JK:K}(x_3))
=\tr_{K:1}(x_2)
=1,
\end{equation*}
and conversely, any $x_1\in\calE_1$ factors as $x_1=x_2x_3$ where $x_2:=\tr_{JK:K}(x_1)\in\calE_2$ and $x_3:=x_1^{}x_2^{-1}\in\calE_3$.
Since the hyperplanes $\calE_1$, $\calE_2$ and $\calE_3$ have cardinality $Q^{JK-1}$, $Q^{K-1}$ and $(Q^K)^{J-1}=Q^{JK-K}$, respectively, this ``$x_1=x_2x_3$" factorization is unique.
Applying the quotient homomorphism $x\mapsto\overline{x}:=x\bbF_{Q}^\times$ to  $\calE_1=\calE_2\calE_3$ gives $\calD_1=\calD_2\calD_3$ where
\begin{align}
\nonumber
\calD_1
&=\overline{\calE_1}
=\set{\overline{x}_1\in\bbF_{Q^{JK}}^\times/\bbF_{Q}^\times: \tr_{JK:1}(x_1)\neq0},\\
\label{eq.GMW 1}
\calD_2
&=\overline{\calE_2}
=\set{\overline{x}_2\in\bbF_{Q^K}^{\times}/\bbF_{Q}^\times: \tr_{K:1}(x_2)\neq0},\\
\nonumber
\calD_3
&=\overline{\calE_3}
=\set{\overline{x}_3\in\bbF_{Q^{JK}}^\times/\bbF_{Q}^\times:\tr_{JK:K}(x_3)\in\bbF_{Q}^\times}.
\end{align}
Here, $\calD_1$ and $\calD_2$ are the complements of Singer difference sets in $\bbF_{Q^{JK}}^\times/\bbF_{Q}^\times$ and $\bbF_{Q^K}^{\times}/\bbF_{Q}^\times$, respectively,
while $\calD_3$ is an
\begin{equation*}
\RDS(\tfrac{Q^{JK}-1}{Q^K-1},\tfrac{Q^K-1}{Q-1},Q^{(J-1)K},(Q-1)Q^{(J-2)K})
\end{equation*}
obtained by quotienting the RDS $\calE_3$ (of type~\eqref{eq.Bose RDS} where ``$Q$" is $Q^K$) by \smash{$\bbF_Q^\times$}.
When written additively, we have $\calD_1=\calD_2+\calD_3$ where $\calD_1$, $\calD_2$ and $\calD_3$ are subsets of $\bbZ_{N_1}$ where $N_1=\tfrac{Q^{JK}-1}{Q-1}$.
In fact, \eqref{eq.GMW 1} gives that $\calD_2$ is a subset of the subgroup of $\bbZ_{N_1}$ of order $N_2=\frac{Q^K-1}{Q-1}$, namely $\langle N_3\rangle$ where
$N_3=\frac{N_1}{N_2}=\frac{Q^{JK}-1}{Q^K-1}$.
As such, any $d_1\in\calD_1$ can be written as $d_1=N_3d_2+d_3$ where $N_3d_2\in\calD_2$, $d_3\in\calD_3$.
This in turn implies that the value of any given character of $\bbZ_{N_1}$ at  $d_1\in\calD_1$ is
\begin{align*}
\exp(\tfrac{2\pi\rmi nd_1}{N_1})
&=\exp(\tfrac{2\pi\rmi n(N_3d_2+d_3)}{N_1})\\
&=\exp(\tfrac{2\pi\rmi nd_2}{N_2})\exp(\tfrac{2\pi\rmi nd_3}{N_3}).
\end{align*}
From this, we see that each vector in the harmonic ETF arising from $\calD_1$ is a tensor product of a vector in the harmonic ETF arising from $\calD_2$ with a vector in the harmonic tight frame arising from the RDS $\calD_3$.

Overall, we see that the classical factorization of the complements of certain Singer difference sets~\cite{GordonMW62} indeed leads to a special case of the ``ETF-tensor-MUETF" construction of Theorem~\ref{thm.tensor} where, as in the proof of Theorem~\ref{thm.main result}, the MUETF in question arises from an RDS.
In particular, the harmonic ETF that arises from the complement of the Singer difference set in $\bbF_{Q^{JK}}^\times/\bbF_{Q}^\times$ partitions into \smash{$\frac{Q^K-1}{Q-1}$} copies of the harmonic ETF that arises from the complement of the Singer difference set in \smash{$\bbF_{Q^{JK}}^{\times}/\bbF_{Q^K}^\times$},
each isometrically embedded into a $Q^{(J-1)K}$-dimensional subspace of a common space of dimension $Q^{JK-1}$.
In the special case where $J=2$, this partitions a harmonic $\ETF(Q^{2K-1},\frac{Q^{2K}-1}{Q-1})$ into \smash{$\frac{Q^K-1}{Q-1}$} embedded regular $Q^K$-simplices,
recovering a result of~\cite{FickusJKM18}.

\subsection{Constructing new ETFs with Theorem~\ref{thm.main result}}

Theorem~\ref{thm.main result} requires an $\ETF(D,N)$ where
\begin{equation}
\label{eq.necessary}
\tfrac{D(N-1)}{N-D}\ \text{is a prime power and}\ D<N<2D.
\end{equation}
Some examples of such ETFs are regular simplices, that is, have $N=D+1$.
All other examples have $D+1<N<2D$,
meaning the parameters $(N-D,N)$ of its Naimark complement satisfy $N>2(N-D)>2$.
As discussed in Section~\ref{sec.background},
this means such an $\ETF(D,N)$ is either the Naimark complement of a SIC-POVM,
satisfies $N=2D-1$ where $D$ is even,
is a harmonic ETF,
or is the Naimark complement of a positive and/or negative ETF
(Definition~\ref{def.pos neg ETFs}).

In light of the previous subsection,
we ignore $\ETF(D,N)$ whose parameters match those of one that arises from the complement of a Singer difference set, as applying Theorem~\ref{thm.main result} to them only recovers ETFs with known parameters.
With a little work, one finds that this includes all $\ETF(D,N)$ that satisfy~\eqref{eq.necessary} and are either regular simplices, have $N=2D-1$ where $D$ is even, or are harmonic ETFs arising from the complements of difference sets of the following types: Singer, Paley, cyclotomic, Hall and twin-prime-power.
Moreover, every $\ETF(D,N)$ that satisfies~\eqref{eq.necessary} and whose Naimark complement is a SIC-POVM is an $\ETF(6,9)$, and every $\ETF(3,9)$ is both $2$-positive and $6$-negative.
Some harmonic ETFs are also positive or negative, including those that arise from Hadamard, McFarland, Spence, and Davis-Jedwab difference sets~\cite{FickusJ19}.

Our search thus reduces to the following:
find $\ETF(D,N)$ that satisfy~\eqref{eq.necessary} and are either Naimark complements of positive or negative ETFs or are harmonic ETFs that arise from (complements of) difference sets which are not one of the aforementioned types.
In the latter case, after searching the literature~\cite{JungnickelPS07},
the only potential candidates that we found are difference sets due to Chen~\cite{Chen97},
whose complements yield $\ETF(D,N)$ where
\begin{equation*}
D = Q^{2J-1}[2(\tfrac{Q^{2J}-1}{Q-1})-1],
\quad
N = 4Q^{2J}(\tfrac{Q^{2J}-1}{Q-1}),
\end{equation*}
where $J\geq 2$ and $Q$ is either a power of $3$ or an even power of an odd prime.
Here, $D<N<2D$ and
\begin{equation*}
\tfrac{D(N-1)}{N-D}=[2(\tfrac{Q^{2J}-1}{Q-1})-1]^2,
\end{equation*}
is sometimes a prime power and sometimes is not.
In cases where it is, applying Theorem~\ref{thm.main result} to the corresponding $\ETF(D,N)$ seems to yield new ETFs with enormous parameters;
we leave a deeper investigation of them for future work.

The only known ETFs that remain to be considered are $\ETF(D,N)$ whose Naimark complements are either positive or negative.
Taking the complementary parameters of those in~\eqref{eq.pos neg D} and~\eqref{eq.pos neg N},
this means there exists $L\in\set{1,-1}$ and integers $K\geq1$ and $S\geq2$ such that
\begin{align*}
D&=[S(\tfrac{K-1}{K})+L][S(K-1)+L],\\
N&=(S+L)[S(K-1)+L].
\end{align*}
As noted in~\cite{FickusJ19}, such ETFs satisfy $D<N<2D$ provided we exclude $1$-positive ETFs, $2$-negative ETFs, and ETFs of type $(3,-1,2)$ or $(3,-1,3)$.
That is, in terms of the partial list (a)--(g) of known families of  positive and negative ETFs given in Subsection~\ref{subsec.pos neg},
we exclude all ETFs from (a) and (d), the first two members of (e), and the first member of (f).
The remaining ETFs on this list satisfy~\eqref{eq.necessary} if and only if
\begin{align*}
\tfrac{D(N-1)}{N-D}
&=\tfrac{K}{S}[S(\tfrac{K-1}{K})+L][S^2(K-1)+KLS]\\
&=[S(K-1)+KL]^2
\end{align*}
is a prime power,
namely if and only if
\begin{equation}
\label{eq.pos neg necessary}
Q=S(K-1)+KL\ \text{is a prime power.}
\end{equation}
Here, $Q\equiv L\bmod(K-1)$, and substituting \smash{$S=\frac{Q-KL}{K-1}$} into the above expressions for $D$, $N$ and \smash{$\tfrac{D(N-1)}{N-D}$} gives
\begin{align*}
D&=\tfrac{Q}{K}[Q-(K-1)L],\\
N&=\tfrac{Q-L}{K-1}[Q-(K-1)L],\\
\tfrac{D(N-1)}{N-D}&=Q^2.
\end{align*}

Many ETFs from (b) satisfy~\eqref{eq.pos neg necessary}.
When $K=2,3,4,5$ for example,
we only need prime powers $Q\equiv 1\bmod(K-1)$ such that $S=\frac{Q-K}{K-1}$ satisfies $S\geq K$ and $S\equiv 0,1\bmod K$,
that is, such that $Q\geq K^2$ and $Q\equiv K,2K-1\bmod K(K-1)$.
An infinite number of such $Q$ exist, including all powers of $K$.
More generally, for any $K\geq 2$, we need prime powers $Q\equiv 1\bmod(K-1)$ such that $S=\frac{Q-K}{K-1}$ is sufficiently large and has the property that $K\mid S(S-1)$.
An infinite number of such $Q$ exist:
in fact, since $K$ and $K-1$ are relatively prime, their product is relatively prime to their sum, at which point Dirichlet's theorem implies there are an infinite number of primes $Q$ such that $Q\equiv 2K-1\bmod K(K-1)$, implying $S=\frac{Q-K}{K-1}\equiv 1\bmod K$.

All ETFs from (c) satisfy~\eqref{eq.pos neg necessary}:
when $(K,L,S)=(P,1,P)$ for some prime power $P$,
$Q=S(K-1)+KL=P^2$ is also a prime power.
(In contrast, Steiner ETFs arising from projective planes of order $P$ are of type $(P+1,1,P+1)$~\cite{FickusJ19}, and in this case $Q=(P+1)^2$ is only sometimes a prime power, such as when $P$ is a Mersenne prime.)

An infinite number of the (nonexcluded) ETFs from (e) satisfy~\eqref{eq.pos neg necessary}:
having $(K,L,S)=(3,-1,S)$ where $S\geq5$, $S\equiv0,2\bmod 3$ implies
$Q=S(K-1)+KL=2S-3$ can be any prime power such that $Q\geq 7$, $Q \equiv 1,3 \bmod 6$,
including all powers of $3$ apart from $3$ itself.

None of the (nonexcluded) ETFs from (f) satisfy~\eqref{eq.pos neg necessary}:
when $(K,L,S)=(2^J+1,-1,2^J+1)$ for some $J\geq 2$,
$Q=S(K-1)+KL=(2^J+1)(2^J-1)$ is not a prime power.

An infinite number of the ETFs from (f) satisfy~\eqref{eq.pos neg necessary}:
when $(K,L,S)=(4,-1,S)$ where $S\equiv3\bmod8$, we have that $Q=S(K-1)+KL=3S-4$ can be any prime power $Q$ such that $Q \equiv 5 \bmod 24$.

Applying Theorem~\ref{thm.main result} to the Naimark complements of these positive and negative ETFs yields the following result:

\begin{corollary}
\label{cor.pos neg}
Let $K\geq 2$, $L\in\set{1,-1}$, and let $Q$ be any prime power such that either:
\begin{enumerate}
\renewcommand{\labelenumi}{(\roman{enumi})}
\item
$Q\geq K^2$ and $Q\equiv K,2K-1\bmod K(K-1)$ where $K=2,3,4,5$ and $L=1$,
\item
$Q=K$ where $L=1$,
\item
$Q\geq 7$ and $Q \equiv 1,3 \bmod 6$ where $K=3$ and $L=-1$,
\item
$Q \equiv 5 \bmod 24$ where $K=4$ and $L=-1$.
\end{enumerate}
Then for any $J\geq 1$,
an $\ETF(D^{(J)},N^{(J)})$ exists where
\begin{align*}
D^{(J)}
&=\tfrac1K[Q-(K-1)L]Q^{2J-1},\\[2pt]
N^{(J)}
&=\tfrac{Q-(K-1)L}{K-1}(\tfrac{Q^{2J}-1}{Q+L}),\\[2pt]
N^{(J)}-D^{(J)}
&=\tfrac{Q^{2J-1}[Q-(K-1)L]^2-K[Q-(K-1)L]}{K(K-1)(Q+L)}.
\end{align*}
\end{corollary}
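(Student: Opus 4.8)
The plan is to apply Theorem~\ref{thm.main result} to the Naimark complement of a suitable positive or negative ETF, with the prime power called ``$Q$'' in that theorem taken to be the square of the $Q$ appearing in the present statement.

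First I would verify that each of the hypotheses (i)--(iv) furnishes a positive or negative ETF of type $(K,L,S)$ with $S=\tfrac{Q-KL}{K-1}$: a Steiner ETF from family (b) of Subsection~\ref{subsec.pos neg} (cases $K\in\set{2,3,4,5}$) under (i); a Naimark complement of a polyphase BIBD ETF from family (c) under (ii); a Tremain ETF from family (e) under (iii); and an ETF from the $(4,-1,S)$ family (with $S\equiv3\bmod 8$) under (iv). In each case the size and congruence conditions imposed on $Q$ are precisely those making $S$ a legitimate parameter---an integer in the required range with the required divisibility, and, for (i), one admitting a $\BIBD((K-1)S+1,K,1)$. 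These checks are already carried out in the paragraphs preceding the statement, so this step amounts mostly to collecting them.

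Next I would pass to the Naimark complement of this type-$(K,L,S)$ ETF, which is an $\ETF(D,N)$ with $N>D>1$. Substituting $S=\tfrac{Q-KL}{K-1}$ into~\eqref{eq.pos neg D} and~\eqref{eq.pos neg N} and then taking complementary parameters recovers, as already recorded just before the corollary, $D=\tfrac{Q}{K}[Q-(K-1)L]$, $N=\tfrac{Q-L}{K-1}[Q-(K-1)L]$, $N-D=\tfrac{1}{K(K-1)}[Q-(K-1)L](Q-KL)$, and $\tfrac{D(N-1)}{N-D}=Q^2$; the exclusions built into (i)--(iv) are exactly those guaranteeing $D<N<2D$. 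Since $Q^2$ is a prime power and $D<N<2D$, Theorem~\ref{thm.main result} applies to this $\ETF(D,N)$, producing for every $J\ge1$ an $\ETF(D^{(J)},N^{(J)})$ given by~\eqref{eq.main 1}--\eqref{eq.main 3} with the symbol $Q$ there read as $Q^2$, namely $D^{(J)}=DQ^{2J-2}$, $N^{(J)}=N\tfrac{Q^{2J}-1}{Q^2-1}$, and $N^{(J)}-D^{(J)}=DN\tfrac{Q^{2J-2}-1}{Q^2-1}+(N-D)$.

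Finally I would substitute the explicit $D$, $N$, $N-D$ from the previous step into these three formulas and simplify, using $Q^2-1=(Q-L)(Q+L)$ together with the identity $(Q-KL)(Q+L)=Q[Q-(K-1)L]-K$; this collapses exactly to the three displayed expressions for $D^{(J)}$, $N^{(J)}$, and $N^{(J)}-D^{(J)}$. I expect the only real obstacle to be organizational: correctly matching each of (i)--(iv) to its family in Subsection~\ref{subsec.pos neg} and confirming that the stated arithmetic on $Q$ yields admissible $(K,L,S)$ (and, in the Steiner case, an existing design). Beyond that bookkeeping, the result follows directly from Theorem~\ref{thm.main result} plus routine algebra.
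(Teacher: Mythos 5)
Your proposal is correct and follows essentially the same route as the paper: match each of (i)--(iv) to a positive/negative family of type $(K,L,S)$ with $S=\frac{Q-KL}{K-1}$, pass to the Naimark complement (which satisfies $D<N<2D$ and $\frac{D(N-1)}{N-D}=Q^2$), and invoke Theorem~\ref{thm.main result} with the prime power taken to be $Q^2$. The closing algebra via $(Q-KL)(Q+L)=Q[Q-(K-1)L]-K$ checks out and reproduces the three displayed parameter formulas.
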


In the special case of (i) where $K=2$,
Corollary~\ref{cor.pos neg} yields $\ETF(D^{(J)},N^{(J)})$ whose Naimark complements have parameters~\eqref{eq.new ETF from 2 pos} for any prime power $Q\geq4$ and $J\geq 1$.
These ETFs arise by applying Theorem~\ref{thm.main result} to the Naimark complements of Steiner ETFs that themselves arise from BIBDs consisting of all $2$-element subsets of a $(Q-1)$-element vertex set~\cite{FickusMT12,FickusJ19}.
Meanwhile, case (ii) yields $\ETF(D^{(J)},N^{(J)})$ whose Naimark complements have parameters~\eqref{eq.neq ETF from GQ} for any prime power $Q$.

Remarkably, all of the $\ETF(D^{(J)},N^{(J)})$ produced by Corollary~\ref{cor.pos neg} with $J\geq2$ seem to be new:
though~\eqref{eq.main 5} implies that they satisfy one necessary condition of harmonic ETFs,
some of them are not harmonic (Example~\ref{ex.ETF(6,9)}),
and none of them seem to arise from known difference sets~\cite{JungnickelPS07};
since these ETFs satisfy~\eqref{eq.main 6},
we know that neither they nor their Naimark complements are real, SIC-POVMs, positive or negative, or have redundancy two.

\section{Conclusions and Future Work}

Theorem~\ref{thm.tensor} produces an ETF by taking certain tensor products of the vectors in a given ETF and MUETF with compatible parameters.
Theorem~\ref{thm.main result} is the special case of this idea in which the MUETF arises from a certain classical RDS.
It is a generalization of the classical Gordon-Mills-Welch factorization of the complement of a Singer difference set~\cite{GordonMW62},
and a reversal of some of the analysis of~\cite{FickusJKM18,FickusS20}.
When applied to various families of known positive and negative ETFs~\cite{FickusJ19},
Theorem~\ref{thm.main result} yields many infinite families of new ETFs, some of which are summarized in Corollary~\ref{cor.pos neg}.

Of course, it would be nice to be able to apply Theorem~\ref{thm.tensor} more broadly.
Doing so requires a better fundamental understanding of MUETFs.
Since harmonic MUETFs equate to RDSs (Theorem~\ref{thm.RDS gives MUETF}),
one approach is to devote more time and energy to their study;
see~\cite{Pott95} for some interesting, important open problems concerning RDSs.
More generally, for what $(D,N,M)$ does an $\MUETF(D,N,M)$ exist?
In the special case where $D=N$, MUETFs reduce to MUBs,
and though some MUBs arise from RDSs~\cite{GodsilR09}, not all seemingly do:
some arise from tensor products of other MUBs,
and yet others arise from mutually orthogonal Latin squares~\cite{WocjanB05}.
Moreover, some MUBs are real~\cite{BoykinSTW05}.
To what extent do these facts generalize to the non-MUB case?
We leave these questions for future work.

\section*{Acknowledgments}
\noindent
The views expressed in this article are those of the authors and do not reflect the official policy or position of the United States Air Force, Department of Defense, or the U.S.~Government.


\end{document}